\theoremstyle{definition}
\newtheorem{thm}{Theorem}[section]
\newtheorem{lemma}[thm]{Lemma}
\newtheorem{proposition}[thm]{Proposition}
\newtheorem{notn}[thm]{Notation}
\newtheorem{conjecture}[thm]{Conjecture}
\newtheorem{remark}[thm]{Remark}
\newtheorem{definition}[thm]{Definition}
\newtheorem{technique}{Technique}
\newtheorem{convention}[thm]{Convention}
\newtheorem*{corollary*}{Corollary}
\newtheorem{application}{Application}
\newtheorem*{linearity}{$\kappabar$-linearity}
\newtheorem{thmx}{Theorem}
\def\m{{\sf m}}
\def\y{{\sf y}}
\def\FF{\mathbb{F}}
\tikzset{%
  symbol/.style={
    draw=none,
    every to/.append style={
      edge node={node [sloped, allow upside down, auto=false]{$#1$}}
    },
  },
}
\newcommand{\kappabar}{\overline{\kappa}}
\def\colim{\operatorname{colim}}
\def\ker{\operatorname{ker}}
\def\coker{\operatorname{coker}}
\def\im{\operatorname{Im}}
\def\tmf{{\sf tmf}}
\newcommand{\Y}{\mathrm{Y}}
\newcommand{\M}{\mathrm{M}}
\newcommand{\A}{\mathrm{A}}
\newcommand{\xra}{\xrightarrow}
\newcommand{\sma}{\wedge}
\def\upperalphabet{\\A\\B\\C\\D\\E\\F\\G\\H\\I\\J\\K\\L\\M\\N\\O\\P\\Q\\R\\S\\T\\U\\V\\W\\X\\Y\\Z}
\def\loweralphabet{\\a\\b\\c\\d\\e\\f\\g\\h\\i\\j\\k\\l\\m\\n\\o\\p\\q\\r\\s\\t\\u\\v\\w\\x\\y\\z}
\newcommand{\mr}[1]{\mathrm{#1}}
\def\makecommands#1#2#3{
    \bgroup
    \def\tempcmdname##1{#1}
    \def\tempcmdbody##1{#2}
    \def\\##1{\expandafter\xdef\csname\tempcmdname{##1}\endcsname{\unexpanded\expandafter{\tempcmdbody{##1}}}}
    #3
    \egroup
}
\definecolor{DarkBlue}{rgb}{.1, 0.35, 0.6} 
\definecolor{PeoBlue}{rgb}{.15, 0.8, 0.6} 
\definecolor{SecBlue}{rgb}{.15, 0.4, 0.4} 
\definecolor{DarkBrown}{rgb}{.5, 0.2, 0.2}
\newcommand{\irina}[1]{\marginpar{\color{VioletRed}#1}}
\newcommand{\prasit}[1]{\marginpar{\color{DarkBlue}#1}}
\newcommand{\pcom}[1]{{\color{DarkBlue}#1}}
\author{Prasit Bhattacharya}\address{New Mexico State University}\email{prasit@nmsu.edu}
\author{Irina Bobkova}\address{Texas A\&M University}\email{ibobkova@tamu.edu}
\author{J.D. Quigley}\address{University of Virginia}\email{mbp6pj@virginia.edu}
\title{New infinite families in the stable homotopy groups of spheres}
\begin{document}

\begin{abstract}
We  identify seven new  $192$-periodic  infinite families of elements in the  $2$-primary stable homotopy groups of spheres, all of which have trivial image under the Hurewicz map for topological modular forms.   We prove that these families are nontrivial  after  $\mathrm{T}(2)$- as well as  $\mathrm{K}(2)$-localization. We also obtain new information about $2$-torsion and $2$-divisibility of some of the previously known $192$-periodic infinite families in the stable stems. 
\end{abstract}

\maketitle


\section{Introduction}

Calculating the stable homotopy groups of spheres has been one of the central problems in algebraic topology  for several decades, with many applications in algebra and  geometry. Since the 1960s, there have been two main approaches: low-dimensional computations,  which  describe the stable stems in a finite range using Adams spectral sequences \cite{MT67, Rav86, KM93,  Isa19, IWX23}, and chromatic computations, which instead aim to identify   large-scale periodic patterns  \cite{  Ada66, Smi77, MRW77, Rav86}.

The first large-scale phenomenon observed  in the stable stems is Serre's finiteness theorem that all positive dimensional stable stems are finite abelian groups \cite{Ser53}. This motivated the study of the  stable stems one prime at a time. In the 1960s,  Toda \cite{Tod62}  identified several $(2p-2)$-periodic families of $p$-torsion elements for primes $p\geq 3$, and Adams \cite{Ada66} identified several $8$-periodic families when $p=2$. A decade later, Smith \cite{Smi77} constructed  $(2p^2-2)$-periodic families for $p \geq 5$, and Miller--Ravenel--Wilson \cite{MRW77} constructed  $(2p^3-2)$-periodic families for $p\geq 7$. These are the first examples of  \emph{periodic phenomena}, which motivated the development of chromatic homotopy theory in the subsequent decades. 


Chromatic homotopy theory implies that  the $p$-local stable stems admit  a decreasing filtration indexed by the natural numbers, where the  $n$-th stratum is called the \emph{chromatic layer $n$}.  The  fundamental results on  nilpotence and  periodicity  \cite{DHS88, HS98}, together with the chromatic convergence theorem \cite{Orange}, imply that chromatic layer $n$ consists entirely of infinite periodic  families. These families are detected by  periodic $v_n$-self-maps of  \emph{type $n$} CW complexes. Adams  completely identified  chromatic layer $1$ \cite{Ada66} in the 1960s, but half a century later, chromatic layer $2$ is still an active area of research \cite{ShimomuraWang, SY, GHMR, GHMV1,  Beh, Behrens5}.  

 Periodic $v_{n}$-self-maps are defined as the self-maps of a $p$-local type $n$ complex that are detected by the $n$-th Morava K-theory at the prime $p$. The coefficient ring for this theory is $\mr{K}(n)_*\cong \FF_p[v_n^{\pm 1}]$, where  $|v_{n}|=2(p^{n}-1)$. A $v_{n}$-self-map detected by the element $v_{n}^{k}$ (often referred to as a $v_{n}^{k}$-self-map) gives rise to $k|v_{n}|$-periodic families within the stable stems. The known calculation of stable stems  suggests that the period index $k$ cannot equal $1$ unless the prime $p$ is sufficiently large relative to the height $n$ (see the table below). These larger periods are among several issues that complicate computations in chromatic homotopy theory at small primes,  with the prime $p=2$ being the most challenging (see \cref{Table:periodicity}). 

\begin{table}[h]
\scriptsize
\renewcommand{\arraystretch}{1.5}
	\begin{tabular}{ c|p{2.8cm}|p{2.4cm}|p{2.4cm}|p{2.4cm} } 
		&$p=2$ & $p=3$ & $p=5$ &$p\geq 7$\\
		\hline
		$n=1$& $2^2|v_1|$ \cite{Ada66}& $|v_1|$ \cite{Ada66, Tod62}& $|v_1|$ \cite{Ada66, Tod62}& $|v_1|$ \cite{Ada66, Tod62}\\ 
		\hline
		$n=2$& $2^5|v_2|$ \cite{BHHM20}\newline $2^5|v_2|$ \cite{BMQ23} \newline $2^5|v_2|$ \Cref{MT:Families} & $3^2|v_2|$ \cite{BP04} & $|v_2|$ \cite{Smi77} &$|v_2|$ \cite{Smi77} \\ 
		\hline
		$n=3$ &? &? &? &$|v_3|$ \cite{MRW77} \\
		\hline	
		$n \geq 4$ &? &? &? & ?  \\	
	\end{tabular}
	\vspace{10pt} \\
	\caption{Known periodicity among infinite families at height $n$ and prime $p$.}\label{Table:periodicity} \vspace{-10pt}
	\end{table}
Investigating chromatic layer $2$ at $p=2$ has been the focus of much recent work \cite{Beau15, BO16, BG18, BHHM20, BMQ23, BBGHPS24, BGH}. One approach, the duality resolution \cite{BBH}, shows promise for identifying numerous new infinite families, but significant technical barriers hinder the complete analysis of the resolution when $p=2$. Another approach, the $\tmf$-based Adams spectral sequence, has yielded recent results; for example the work in \cite{BHHM20, BMQ23} identifies the zero line of this spectral sequence, resulting in new $192$-periodic infinite families within the Hurewicz image of $\tmf$. But like the duality resolution, it is difficult to push this method and detect elements beyond the Hurewicz image of $\tmf$. 

Our main result identifies seven new $2$-local 192-periodic families in the stable stems. These are the first examples of infinite families which are \emph{not} in the Hurewicz image of $\tmf$: 
\begin{thmx}\label{MT:Families} \emph{
For each $m \in \{23, 47, 71, 74, 95, 119, 167 \}$ and $k \in \mathbb{N}$, there exists  an element of  order $2$ in dimension $m+192k$ of the  stable stems whose image is trivial under the $\tmf$-Hurewicz homomorphism.   }
\end{thmx}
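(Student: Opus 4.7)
The plan is to realize each $192$-periodic family as a composition
\[
S^{m+192k}\xrightarrow{\alpha_m}\Sigma^{192k}Y\xrightarrow{v^{k}}Y\xrightarrow{\beta_m}S^{b_m},
\]
where $Y$ is a finite type-$2$ complex at $p=2$ equipped with a $v_2^{32}$-self-map $v\colon\Sigma^{192}Y\to Y$, and $\alpha_m,\beta_m$ are carefully chosen inclusions of and projections from cells of $Y$. Such a $v_2^{32}$-self-map exists by \cite{BHHM20, BMQ23} on appropriate choices of $Y$ (for example smash products of $\mr{A}_1$-type or $\Y$-type complexes), and the $192$-periodicity of the resulting family follows immediately. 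Choosing $Y$ so that its cells carry $2$-torsion only (e.g.\ by smashing with a mod-$2$ Moore spectrum) arranges that each $\gamma_{m,k}:=\beta_m\circ v^k\circ\alpha_m$ is $2$-torsion.

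The bulk of the work is to prove nontriviality of every $\gamma_{m,k}$ and to verify that its image under the $\tmf$-Hurewicz map vanishes. For nontriviality, the strategy is to further map $Y$ into a height-$2$ spectrum that is sensitive to the relevant class --- the $\mr{K}(2)$-local sphere, or a more computable proxy such as $\mr{TMF}_1(3)$ or the $\mr{EO}_2$ associated to a finite subgroup of the Morava stabilizer group --- and compute in the corresponding descent spectral sequence. After $\mr{T}(2)$- or $\mr{K}(2)$-localization the self-map $v$ becomes a unit, so the full family collapses to a single nonzero class per stem $m$, which can be named explicitly on the $E_2$-page and shown to be a permanent cycle. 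Triviality of the $\tmf$-Hurewicz image is checked by direct inspection of the $\tmf_\ast$-chart: the stems $m\in\{23,47,71,74,95,119,167\}\pmod{192}$ lie outside the region in which the $v_2^{32}$-periodic part of the zero line of $\tmf_\ast$ is supported in \cite{BHHM20, BMQ23}; alternatively one can show that $\alpha_m$ or $\beta_m$ is already killed by the unit map $Y\to Y\sma\tmf$, since the $\tmf$-Hurewicz image is closed under $v_2^{32}$-multiplication.

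The main obstacle I anticipate is the detection step: identifying each of the seven classes in the $\mr{K}(2)$-local (or $\mr{T}(2)$-local) $E_2$-page and excluding hidden extensions or differentials that could annihilate them. In particular, pinning down the correct names of $\alpha_m$ and $\beta_m$ --- and ensuring that their composite with every iterate $v^k$ remains essential --- requires managing indeterminacies both in the construction of $Y$ and in the definition of $v$, which at $p=2$ are notoriously subtle and only exist after passing to suitable smash powers. A secondary difficulty is cleanly separating these seven families from nearby $\tmf$-detected families in adjacent stems, so that the claim that they are genuinely new infinite families not visible to $\tmf$ is unambiguous.
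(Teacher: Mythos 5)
Your scaffolding---a composite through a finite type-$2$ complex carrying a $v_2^{32}$-self-map, with $2$-torsion forced by factoring through the pinch map of a mod-$2$ Moore spectrum---matches the shape of the paper's construction, which uses $\A_1$ and the pinch map ${\sf p}={\sf p}_1\circ{\sf p}_2\circ{\sf p}_3\colon \A_1\to\Sigma^6\SS$. But the two steps you defer to ``the main obstacle'' are precisely where the entire content of the theorem lies, and the routes you sketch for them are not the ones that work. The paper does \emph{not} detect these classes in a $\mr{K}(2)$- or $\mr{T}(2)$-local descent spectral sequence; nontriviality comes from a diagram chase in the long exact sequence of $\SS\xrightarrow{2}\SS\to\M$. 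The inputs are (i) Pham's theorem that ${\sf h}_{\tmf}\colon\pi_*\A_1\to\tmf_*\A_1$ is surjective, (ii) the complete determination of the $\tmf$-Hurewicz image of the sphere from \cite{BMQ23}, and (iii) a lengthy explicit computation of the maps $p_3,p_2,p_1,i_1$ in the $\tmf$-homology long exact sequences (the paper's Section 2 and its summary table). The mechanism is: one finds $a\in\tmf_k\A_1$ with $m:=p_2(p_3(a))\neq 0$ in $\tmf_{k-5}\M$ and $p_1(m)=0$, so $m=i_1(s)$ for some $s\in\tmf_{k-5}$; when \emph{no} such lift $s$ lies in the Hurewicz image---which happens exactly for $s\in\{8\Delta,4\Delta^2,8\Delta^3,(\eta\Delta)^3,2\Delta^4,8\Delta^5,8\Delta^7\}$ and their $\Delta^{8i}$-multiples---exactness forces any homotopy lift $\widetilde m\in\pi_{k-5}\M$ of $m$ to satisfy $p_1(\widetilde m)\neq 0$ in $\pi_{k-6}\SS$. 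The vanishing of the $\tmf$-Hurewicz image is then automatic, since ${\sf h}_{\tmf}(p_1(\widetilde m))=p_1(m)=0$ by construction; it is not established by observing that the stems lie outside the support of some zero line. Indeed several of these stems contain large Hurewicz-visible classes---it is the specific elements, not the congruence classes of dimensions, that escape $\tmf$.

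Without these inputs your argument cannot close: you have no criterion for choosing $\alpha_m$ and $\beta_m$, no mechanism certifying that $\gamma_{m,k}$ is essential, and the assertion that the relevant stems lie outside the $v_2^{32}$-periodic part of the zero line of a $\tmf$-based spectral sequence is not a statement you can invoke. The $\mr{K}(2)$- and $\mr{T}(2)$-local nontriviality that you propose as the detection engine is, in the paper, a separate downstream theorem: it is proved using $v_1^{-1}\mr{TMF}\simeq\mr{KO}^{\sma}_2(\!(\mathfrak{j}^{-1})\!)$ together with the finiteness of $\pi_*\widehat{\SS}$ in degrees $24,48,72,96,120,168$, and it depends on the families already having been constructed and shown nonzero in $\pi_*\SS$. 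So the proposal correctly guesses the skeleton but omits the key lemma (the exactness dichotomy driven by surjectivity of ${\sf h}_{\tmf}$ on $\pi_*\A_1$ and the known Hurewicz image) and the computational core that identifies the seven dimensions in the first place.
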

\begin{remark} \label{rmk:predict} A comparison of our work with known calculations  \cite{Isa19, IWX23}  suggests that  
the elements  ${\sf Ph}_1{\sf d}_0$, ${\sf h}_1^2 \cdot (\Delta {\sf h}_1{\sf g})$, ${\sf  h}_1^2 \cdot (\Delta^2 {\sf h}_1 {\sf g})$, ${\sf d}_0{\sf g}^3$, $(\Delta {\sf h}_1)^3{\sf g}$, $\Delta^4 {\sf h}_1^3 {\sf g}$, and $\Delta^6 {\sf h}_1^3 {\sf g}$  in the chart of \cite{IWX23} detect  the elements in dimension $23$, $47$, $71$, $74$, $95$, $119$, and  $167$ of \Cref{MT:Families}, respectively.\footnote{We thank Bob Bruner and Dan Isaksen for this observation.}
\end{remark}


The \(2\)-local connective spectrum of topological modular forms $\tmf$, a key construction in spectral algebraic geometry \cite{Ltmf, Btmf}, serves as a powerful tool for exploring chromatic height $2$ at the prime $2$. Its efficacy stems from the fact that $\tmf$-homology groups are readily computable, while its coefficient ring $\tmf_*$ exhibits intricate patterns reflecting the structure of the second chromatic layer \cite{Bau08, DFHH14, BR21}. 

Recent years have seen new techniques involving $\tmf$ \cite{BOSS19, BBT21, BBC23} yield significant results \cite{BHHM20, Bobkova20, BElocal20, BBBCX21} concerning the chromatic layer $2$. This progress culminated in the paper \cite{BMQ23}, which completely calculates the image of the $2$-primary Hurewicz homomorphism
\[ 
 \begin{tikzcd}
{\sf h}_{\tmf}: \pi_*\SS\rar & \tmf_*.
\end{tikzcd}
\]  A major consequence of this work is the identification of numerous previously unknown $192$-periodic infinite families and patterns within the second chromatic layer, all with nontrivial image in the $\mr{K}(2)$-local and $\mr{T}(2)$-local stable stems.

The identification of infinite families in \Cref{MT:Families} uses a $v_2$-self-map. This implies that they inherently possess a nonzero image within the $\mr{T}(2)$-local stable stems, a consequence of a general theory fleshed out in \Cref{sec:T2}. We further establish that these elements also have a nonzero image in the  $\Kmr(2)$-local stable stems in \Cref{sec:K2}:
 \begin{thmx}[{\cref{MT:T2} and \cref{MT:K2}}]\label{MT:KT2}
 \emph{ All elements listed in \Cref{MT:Families}  have nonzero images in the  $\mr{T}(2)$-local and $\mr{K}(2)$-local stable stems at $p=2$.}
  \end{thmx}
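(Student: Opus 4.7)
The plan is to use that each element $\beta_{m,k}$ of \cref{MT:Families} arises as a composition
$S^{m+192k} \xra{\iota} \Sigma^{192k}X \xra{v^{k}} X \xra{\pi} S^{m}$
involving an iterate of a $v_{2}^{32}$-self-map $v\colon \Sigma^{192}X \to X$ on a finite type $2$ complex $X$, and then to reduce the nonvanishing claims to statements about $\pi_{\ast}L_{\mr{T}(2)}X$ and $\pi_{\ast}L_{\mr{K}(2)}X$.

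For the $\mr{T}(2)$-local assertion, the plan is as follows. Since $v$ is a $\mr{T}(2)$-local equivalence by the very definition of a $v_{2}$-self-map, its iterates $v^{k}$ become invertible after $\mr{T}(2)$-localization. Consequently, under the telescope identification $L_{\mr{T}(2)}X \simeq v^{-1}X$, the $\mr{T}(2)$-local image of $\beta_{m,k}$ agrees with the $\mr{T}(2)$-local image of the $k=0$ seed $\pi \circ \iota_{0}$. The general detection criterion to be developed in \cref{sec:T2} packages this observation, reducing the problem to exhibiting each seed as a nonzero class in $\pi_{\ast}L_{\mr{T}(2)}X$. This in turn can be verified by locating a permanent cycle representative in the $v_{2}$-localized Adams spectral sequence for $X$ and checking that no higher-filtration obstruction kills it in the telescope.

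For the $\mr{K}(2)$-local assertion, apply the canonical comparison $L_{\mr{T}(2)}\mathbb{S} \to L_{\mr{K}(2)}\mathbb{S}$. Since the elements have trivial $\tmf$-Hurewicz image by \cref{MT:Families}, one cannot detect them via $L_{\mr{K}(2)}\tmf$, and a finer mechanism is required. The plan is to work directly with the $\mr{K}(2)$-local descent spectral sequence for $L_{\mr{K}(2)}X$, whose $E_{2}$-page is Morava stabilizer group cohomology $H^{\ast}(\mathbb{G}_{2}; (E_{2})_{\ast}X)$. One lifts each seed to a concrete $E_{\infty}$-class in this spectral sequence, using known vanishing lines and the relative sparseness of the $E_{2}$-page at height $2$ and $p=2$ to rule out differentials landing on it and to control potential hidden extensions along the unit map $L_{\mr{K}(2)}\mathbb{S}\to L_{\mr{K}(2)}X$.

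The main obstacle is the $\mr{K}(2)$-local nonvanishing. The telescope conjecture is open at $(p,n)=(2,2)$, so nonvanishing in $\pi_{\ast}L_{\mr{T}(2)}\mathbb{S}$ does not imply nonvanishing in $\pi_{\ast}L_{\mr{K}(2)}\mathbb{S}$; and the $\tmf$-Hurewicz shortcut is unavailable precisely for our elements. Thus one must engage directly with the less tractable descent spectral sequence for $L_{\mr{K}(2)}X$, where the technically delicate step is pinning down the explicit $E_{\infty}$-representative of each seed and excluding spurious differentials and extensions from higher chromatic filtration.
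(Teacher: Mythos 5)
Your proposal correctly identifies the starting point (the elements factor through a type~$2$ complex carrying a $v_2^{32}$-self-map, so the whole family is controlled by one composite), but both detection steps diverge from what the paper actually does, and the $\mr{K}(2)$-local step as you describe it would not go through. For the $\mr{T}(2)$-local claim, the paper does \emph{not} compute a $v_2$-localized Adams spectral sequence for $\A_1$; no such computation is available in the needed range at $p=2$, and it is not needed. Instead, the paper observes that $\pi_*\Phi_{\A_1}(\SS)$ is a colimit along precomposition with the self-map, so the image of $\overline{\sf s}$ there is nonzero precisely because \emph{every} member $\overline{\sf s}_i=p_1(p_2(p_3(v^i\circ\widetilde a)))$ of the $192$-periodic family is already known to be nonzero in $\pi_*\SS$ from the $\tmf$-homology computations. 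Your reduction ``to the $k=0$ seed'' hides exactly this content: an element of the telescope survives iff it is nonzero at every finite stage, which is the nonvanishing of all the $\beta_{m,k}$, not just the seed. One also has to pass from $\Phi_{\A_1}(\SS)$ to $\SS_{\mr{T}(2)}$, which the paper does via the cofinal tower $\Sigma^{-6}\A_1\to\Sigma^{-10}\mr{M}(1,4)\to\Sigma^{-18}\mr{M}(2,8)\to\cdots$ and the $\lim^1$-surjection $\pi_*\SS_{\mr{T}(2)}\twoheadrightarrow\lim_k\pi_*\Phi_k(\SS)$; your sketch omits this step.

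The serious gap is the $\mr{K}(2)$-local part. Computing with the descent spectral sequence $H^*(\mathbb{G}_2;(\mr{E}_2)_*\mr{X})$ and appealing to ``relative sparseness'' is not viable at $p=2$, $n=2$: that $E_2$-page is notoriously dense and the image of the unit map in $\pi_*L_{\mr{K}(2)}\A_1$ is precisely the hard open computation (the duality resolution program) that the paper is trying to avoid. The paper's actual mechanism is entirely different: Pham's theorem that ${\sf h}_{\mr{TMF}}:\pi_*\widehat{\A}_1\twoheadrightarrow\mr{TMF}_*\A_1$ is surjective, together with the injectivity of $\tmf_*\mr{X}\to\mr{TMF}_*\mr{X}$ (faithfulness of the $\Delta^8$-action), lets the proof of \cref{MT:Families} run verbatim $\mr{K}(2)$-locally \emph{provided} the classes $8\Delta,4\Delta^2,8\Delta^3,(\eta\Delta)^3,2\Delta^4,8\Delta^5,8\Delta^7$ are not in the image of ${\sf h}_{\mr{TMF}}:\pi_*\widehat{\SS}\to\mr{TMF}_*$. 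That last input is supplied by the finiteness of $\pi_*\widehat{\SS}$ in degrees $24,48,72,96,120,168$ (so the infinite-order classes cannot be hit) and, for $(\eta\Delta)^3$, by a $\mr{K}(1)$-localization argument factoring through $\mr{KO}^{\sma}_2$ and detecting a forbidden negative power of $\mathfrak{j}$. Without identifying these inputs, your proposed route does not constitute a proof of the $\mr{K}(2)$-local statement.
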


 
 
Beyond their significance in chromatic homotopy theory, the existence of these 192-periodic families has profound implications for geometric topology, specifically the classification of smooth structures on spheres. Recall that an exotic $n$-sphere is a smooth $n$-dimensional manifold which is homeomorphic, but not diffeomorphic, to $S^n$ with its standard smooth structure.  The foundational work of Kervaire and Milnor \cite{KM63}  relates the stable stems to the classification of smooth structures on homotopy spheres. The work of Kervaire and Milnor  \cite{KM63}, Browder \cite{Bro69}, Hill, Hopkins, and Ravenel \cite{HHR16}, and Wang and Xu \cite{WX17} implies that exotic spheres exist in every odd dimension except for $1$, $3$, $5$, and $61$. 

The even-dimensional case is more elusive. Classical results of Adams and Toda show that exotic spheres exist in at least one quarter of all even dimensions, and more recently, work of Behrens, Hill, Hopkins, Mahowald \cite{BHHM20} and Behrens, Mahowald, and Quigley \cite{BMQ23}  extended this to over half of all even dimensions. These developments support a perspective, prominently suggested by Wang and Xu \cite[Conjecture 1.17]{WX17}, that the existence of unique smooth structures is exceedingly rare. Specifically, it is expected that for $n > 4$, a unique smooth structure exists only in dimensions $5$, $6$, $12$, $56$, and $61$. The identification of new periodic families at height $2$ is a crucial step toward verifying this picture, as the work of Kervaire and Milnor implies that these families give rise to infinite families of exotic spheres. 

 
 
While previous work in stable homotopy theory has typically focused on exotic spheres in general, Kervaire and Milnor's work naturally divides exotic spheres into two subsets: those which bound parallelizable manifolds (``bP spheres") and those which do not (``very exotic spheres" \cite{Sch85}). From the perspectives of geometric topology and Riemannian geometry, very exotic spheres are particularly mysterious. For instance, every bP sphere is known to admit a smooth faithful $\mr{S}^1$-action \cite{HH67}, but not every very exotic sphere is know to admit such an action. As another example, only one very exotic sphere is known to admit a Riemannian metric of positive Ricci curvature, whereas all  bP spheres admit such metrics \cite{Wra97}. 

Every even-dimensional exotic sphere is necessarily very exotic, whereas most of the known odd-dimensional exotic spheres are bP spheres. Kervaire and Milnor's work shows that identifying very exotic spheres is fundamentally a problem of detecting elements  in the cokernel of the $\mr{J}$-homomorphism. The work mentioned above \cite{BHHM20, BMQ23} implies that very exotic spheres exist in at least $93$ congruence classes of dimensions modulo $192$. The 6 new congruence classes in \Cref{MT:Families} (all except for $74$) are not covered by prior work, leading to the following conclusion. 



\begin{corollary*}
\emph{Very exotic spheres exist in over half of all dimensions. More precisely, they exist in at least $99$ congruence classes of dimensions modulo $192$.} 
\end{corollary*}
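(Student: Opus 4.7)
\emph{Proof proposal.} My plan is to combine Theorem~\ref{MT:Families} with the Kervaire--Milnor translation of smooth structures into stable homotopy and with the detection of $\mr{im}(\mr{J})$ by the $\tmf$-Hurewicz map, and then to count congruence classes modulo $192$. By \cite{KM63}, a very exotic sphere of dimension $n$ corresponds to a nonzero class in the cokernel of the $\mr{J}$-homomorphism $\pi_n^s/\mr{im}(\mr{J})$, so it suffices, for each $m \in \{23,47,71,74,95,119,167\}$ and each $k \in \mathbb{N}$, to produce an element of $\pi_{m+192k}^s$ lying outside $\mr{im}(\mr{J})$.

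The natural candidates are the families supplied by Theorem~\ref{MT:Families}: each is nonzero of order $2$ and has trivial $\tmf$-Hurewicz image by construction. To promote this to a statement about $\mr{im}(\mr{J})$, I would invoke the fact, visible from the full computation of the $\tmf$-Hurewicz image in \cite{BMQ23}, that at $p=2$ every nonzero element of $\mr{im}(\mr{J})$ maps nontrivially under ${\sf h}_{\tmf}$. Consequently these families escape $\mr{im}(\mr{J})$, and each contributes a very exotic sphere in its dimension. (For the even case $m=74$ one could alternatively observe that $\mathrm{bP}_{n+1}=0$ when $n$ is even, so any nonzero exotic $n$-sphere in even dimension is automatically very exotic.) I would then tabulate the seven resulting dimensions modulo $192$ and cross-reference them against the $93$ congruence classes already known from \cite{BHHM20, BMQ23}. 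The target, as flagged in the paragraph preceding the corollary, is to verify that $74$ is already covered while the remaining six classes $\{23,47,71,95,119,167\}$ are new, giving a total of $93+6=99$; since $99 > 96 = 192/2$, this implies that very exotic spheres exist in more than half of all dimensions.

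The main obstacle is the final bookkeeping step: a careful audit of the specific list of $93$ congruence classes produced in \cite{BHHM20} and \cite{BMQ23} is needed to confirm that the only overlap with our seven new dimensions is $74$, and that the remaining six are pairwise distinct modulo $192$ (the latter being clear by inspection). The homotopy-theoretic ingredients --- the Kervaire--Milnor classification and the detection of $\mr{im}(\mr{J})$ at the prime $2$ by $\tmf$ --- are then essentially off-the-shelf appeals to existing results.
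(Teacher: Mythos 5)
Your proposal is correct and follows essentially the same route as the paper: Kervaire--Milnor reduces the question to exhibiting nonzero classes in $\mathrm{coker}(\mr{J})$, the new families of \Cref{MT:Families} lie there because $\mathrm{im}(\mr{J})$ is detected by ${\sf h}_{\tmf}$ while these classes have trivial $\tmf$-Hurewicz image, and the count is $93$ classes from \cite{BHHM20,BMQ23} plus the $6$ new congruence classes $\{23,47,71,95,119,167\}$ (with $74$ already covered, or handled by the observation that even-dimensional exotic spheres are automatically very exotic), giving $99>96$. The paper treats this as an immediate consequence of the preceding discussion rather than writing out a formal proof, so your slightly more explicit justification of the $\mathrm{im}(\mr{J})$ step is a faithful elaboration rather than a different argument.
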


This leads us to suggest a variant of the Wang--Xu conjecture. The low-dimensional computations of Isaksen--Wang--Xu \cite{IWX23} and Ravenel \cite{Rav86}, together with the recent results \cite{BHHM20,BMQ23}, verify the  following conjecture  up to dimension $102$. 

\begin{conjecture}
\emph{Very exotic spheres exist in all dimensions greater than $4$, except dimensions $5$, $6$, $11$, $12$, $27$, $43$, $56$, and $61$. }
\end{conjecture}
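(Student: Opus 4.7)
The plan is to reduce the conjecture, via Kervaire--Milnor \cite{KM63}, to the statement that $\coker(J)_n \neq 0$ for all $n > 4$ outside the listed exceptional set, and then to extend the known verification. The Isaksen--Wang--Xu computations \cite{IWX23} handle dimensions $n \leq 102$ explicitly, so it remains to treat each congruence class modulo $192$ for $n > 102$. Combining \cite{BHHM20, BMQ23} with \Cref{MT:Families} already provides $192$-periodic infinite families in $\coker(J)$ covering $99$ of the $192$ residue classes. The task therefore reduces to producing, in each of the remaining $93$ residues, a $192$-periodic infinite family of $\coker(J)$-elements.

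For each such residue $r$, I would mimic the strategy underlying \Cref{MT:Families}: choose a $2$-local type $2$ complex $X$ admitting a $v_2^{32}$-self-map $v\colon \Sigma^{192}X \to X$, locate a class $S^a \to X$ and a projection $X \to S^b$ so that the composites $S^{r+192k} \to \Sigma^{192k} X \xrightarrow{v^k} X \to S^b$ realize the desired residue, and then verify nontriviality using $\mathrm{T}(2)$- and $\mathrm{K}(2)$-local detection as in \Cref{sec:T2} and \Cref{sec:K2}. For residues visible in $\tmf$ the $\tmf$-Hurewicz image suffices; for residues invisible to $\tmf$ (precisely the situation the present work addresses) the natural detecting tools are the duality resolution of \cite{BBH} and the $\mathrm{K}(2)$-local sphere computations of \cite{GHMR, BG18, BGH}.

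The hard part will be the residues for which no currently available type $2$ complex admits a $v_2$-self-map landing in the desired degrees with a detectable cofiber class. Resolving these likely demands either constructing new type $2$ complexes with carefully designed attaching maps, or pushing the $\mathrm{K}(2)$-local sphere computation at $p = 2$ substantially further than is presently known. In the worst case certain residues may receive contributions only from chromatic heights $\geq 3$ at $p = 2$, which is essentially out of reach given the gaps displayed in \Cref{Table:periodicity}. A reasonable intermediate milestone is to verify the conjecture in the single period $[103, 294]$ by constructing $v_2$-periodic families there, since such families automatically propagate to all higher dimensions via iteration of the self-map.
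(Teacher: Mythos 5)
This statement is a \emph{conjecture} in the paper, not a theorem: the authors offer no proof, and explicitly state only that it has been \emph{verified up to dimension $102$} by combining \cite{IWX23, Rav86} with \cite{BHHM20, BMQ23} and the new families of \Cref{MT:Families}. Your proposal is therefore being measured against nothing, and on its own terms it is not a proof but a research program. The reduction via Kervaire--Milnor to nonvanishing of $\coker(\mr{J})$ in the relevant dimensions is the right framing (modulo the standard caveat about the Kervaire-invariant dimensions, where an element of $\coker(\mr{J})$ does not automatically yield a very exotic sphere), and your accounting of $99$ covered residue classes mod $192$ versus $93$ remaining matches the paper's corollary. But everything after that is aspirational: ``choose a type $2$ complex\dots locate a class\dots verify nontriviality'' is precisely the open problem, and you concede as much when you write that some residues are ``essentially out of reach.'' No construction is exhibited for even one of the $93$ missing residues, so no new dimension is settled.

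Two further points. First, $\mr{T}(2)$- or $\mr{K}(2)$-local detection alone does not suffice: one must also show the resulting classes are not in the image of $\mr{J}$ (which is automatic here only because the $\mr{J}$-image is concentrated at height $1$ and is well understood, but this should be said). Second, your own closing remark undercuts the strategy: if certain residues receive contributions only from chromatic height $\geq 3$ at $p=2$, then no $v_2$-periodic construction of the kind you describe can ever reach them, and the conjecture would require entirely different methods in those residues. The honest conclusion is that the statement remains a conjecture; what you have written is a plausible plan of attack for some unknown subset of the remaining $93$ residue classes, not a proof.
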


\subsection{Methodology} We consider  a type $2$  spectrum $\A_1$ which is constructed using three cofiber sequences 
\begin{equation} \label{C1}
\begin{tikzcd}
\SS \rar["2"] & \SS \rar & \M \rar["{\sf p}_1"] & \Sigma \SS, 
\end{tikzcd}
\end{equation}
\begin{equation} \label{C2}
\begin{tikzcd}
\Sigma\M \rar["\eta"] & \M \rar & \Y  \rar["{\sf p}_2"] & \Sigma^2 \M,
\end{tikzcd}
\end{equation}
\begin{equation} \label{C3}
\begin{tikzcd}
\Sigma^2 \Y \rar["v"] & \Y \rar & \A_1  \rar["{\sf p}_3"] & \Sigma^3 \Y, 
\end{tikzcd}
\end{equation}
where $v$ is a choice of a $v_1^1$-self-map of $\Y$. The recent work of Viet-Cuong Pham \cite{Pha23}, which shows that the  $\tmf$-Hurewicz homomorphism 
\begin{equation} \label{HA1}
\begin{tikzcd}
{\sf h}_{\tmf}: \pi_*\A_1 \rar[two heads] & \tmf_*\A_1
\end{tikzcd}
\end{equation}
is a surjection, is the starting point of our calculations. We then study long exact sequences associated to the cofiber sequences  \eqref{C1}, \eqref{C2} and \eqref{C3} using  our knowledge  of $\tmf_*$ \cite{Bau08, DFHH14, BR21}, as well as $\tmf_*\M$, $\tmf_*\Y$, and $\tmf_*\A_1$  \cite{BBPX22, Pha23}.

By combining this study  with our complete knowledge of the Hurewicz image in $\tmf_*$ \cite{BMQ23},  we identify seven new infinite families of elements in  $\pi_*\SS$ (listed  in \Cref{MT:Families}) which are in the image of the pinch map 
\begin{equation} \label{eq:p3p2p1}
\begin{tikzcd}
{\sf p}: \A_1 \rar["{\sf p}_3"] & \Sigma^3 \Y \rar["{\sf p}_2"] & \Sigma^5 \M \rar["{\sf p}_1"] & \Sigma^6 \SS 
\end{tikzcd}
\end{equation}
in stable homotopy. 


The $192$-periodic elements in the stable stems constructed in \cite{BMQ23} were all shown to have order at most $8$. The $\tmf$-homology calculations of \Cref{SS:lift}  lead to  new information about the $2$-torsion of some of the $192$-periodic infinite families identified  in \cite{BMQ23}. We deduce this from \Cref{Table:A1lifts} using the fact that the elements in the image of $p_1$ are simple $2$-torsion, where $p_1$ is the map defined in \eqref{tmfLES3}. 

 \begin{thmx}\label{MT:tordiv}
 \emph{ There exists a simple $2$-torsion element in the stable stems with $\tmf$-Hurewicz image} $\Delta^{8k}x$, $k \geq 0$, where
  \[ x \in \{ \kappa \nu, \ 4\overline{\kappa}, \ \overline{\kappa}^2\eta^2, \ \eta \Delta \overline{\kappa}^2, \ 4 \Delta^2 \overline{\kappa}, \ \overline{\kappa}^4, \  \eta^2 \Delta^2 \overline{\kappa}^2, \ 2 \Delta^4 \cdot 2 \overline{\kappa}, \ 4 \Delta^6 \overline{\kappa}\}.\]
  \end{thmx}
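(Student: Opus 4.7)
The plan is to combine the $\tmf$-homology lifts recorded in \Cref{Table:A1lifts} with the surjectivity of $\mathsf{h}_{\tmf}\colon \pi_*\A_1 \twoheadrightarrow \tmf_*\A_1$ established by Pham \cite{Pha23}, and to exploit the observation that any element in the image of the pinch map $\mathsf{p}_1\colon \M \to \Sigma\SS$ is annihilated by $2$. Concretely, for each $x$ in the stated list, \Cref{Table:A1lifts} produces a class $\widetilde{y}_{x} \in \tmf_{|x|+6}\A_1$ whose image under the composite $\mathsf{p}_* := (\mathsf{p}_1)_* \circ (\mathsf{p}_2)_* \circ (\mathsf{p}_3)_*$ on $\tmf$-homology is $x \in \tmf_*$. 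Iterating the $v_2^{32}$-self-map of $\A_1$, which acts as $\Delta^8$ on $\tmf$-homology, extends this to classes $\widetilde{y}_{x,k} \in \tmf_{*}\A_1$ with $\mathsf{p}_*(\widetilde{y}_{x,k}) = \Delta^{8k}x$ for every $k\geq 0$.

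By Pham's surjectivity theorem, each $\widetilde{y}_{x,k}$ lifts to a homotopy class $\widetilde{\alpha}_{x,k} \in \pi_*\A_1$. I would then define $\alpha_{x,k} := \mathsf{p}_*(\widetilde{\alpha}_{x,k}) \in \pi_{|x|+192k}\SS$, where $\mathsf{p}\colon \A_1\to \Sigma^6\SS$ is the pinch map of \eqref{eq:p3p2p1}. Naturality of $\mathsf{h}_{\tmf}$ immediately gives
\[
\mathsf{h}_{\tmf}(\alpha_{x,k}) \; = \; \mathsf{p}_*(\mathsf{h}_{\tmf}(\widetilde{\alpha}_{x,k})) \; = \; \mathsf{p}_*(\widetilde{y}_{x,k}) \; = \; \Delta^{8k}x,
\]
so $\alpha_{x,k}$ has the desired Hurewicz image. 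Moreover, since $\mathsf{p} = \mathsf{p}_1 \circ \mathsf{p}_2 \circ \mathsf{p}_3$, the element $\alpha_{x,k}$ lies in the image of $(\mathsf{p}_1)_*\colon \pi_{n}\M \to \pi_{n-1}\SS$. The long exact sequence
\[
\cdots \to \pi_n\SS \xrightarrow{2} \pi_n\SS \to \pi_n\M \xrightarrow{(\mathsf{p}_1)_*} \pi_{n-1}\SS \xrightarrow{2} \pi_{n-1}\SS \to \cdots
\]
associated to \eqref{C1} then forces $2\alpha_{x,k}=0$, so $\alpha_{x,k}$ is simple $2$-torsion.

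The main obstacle is producing \Cref{Table:A1lifts}: each $x$ must be traced, one cofiber sequence at a time, through the $\tmf$-homology long exact sequences of \eqref{C1}, \eqref{C2}, and \eqref{C3}, and at each stage one must verify that the candidate class is in the image of the appropriate connecting map. This requires careful bookkeeping of the differentials and the module structure of $\tmf_*\M$, $\tmf_*\Y$, and $\tmf_*\A_1$ from \cite{BBPX22, Pha23}. Once the table is established, however, the remainder of the argument is formal: Pham's surjectivity promotes each $\tmf$-homology lift to a genuine homotopy class, and factoring through the mod-$2$ Moore spectrum $\M$ automatically yields the $2$-torsion bound.
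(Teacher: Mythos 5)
Your proposal is correct and follows essentially the same route as the paper: read the lifts off \Cref{Table:A1lifts}, use Pham's surjectivity of $\mathsf{h}_{\tmf}\colon \pi_*\A_1 \to \tmf_*\A_1$ together with naturality of the Hurewicz map along the pinch composite \eqref{eq:p3p2p1}, propagate by the $v_2^{32}$-self-map detected by $\Delta^8$, and conclude simple $2$-torsion from the fact that the image of $(\mathsf{p}_1)_*$ on homotopy is the kernel of multiplication by $2$ in the long exact sequence of \eqref{C1}. This is precisely the deduction the paper indicates, so no further comparison is needed.
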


\begin{remark}
One can alternatively show that $\Delta^{8k} \kappa \nu$ and $\Delta^{8k}\bar{\kappa}^2 \eta^2$ admit simple $2$-torsion lifts in the stable stems using the facts that $\Delta^{8k} \kappa \nu = (\Delta^{8k} \nu) \cdot \kappa$, $\Delta^{8k}\bar{\kappa}^2\eta^2 = (\Delta^{8k}\bar{\kappa}^2 \eta) \cdot \eta$, $\Delta^{8k}\nu$ and $\Delta^{8k}\bar{\kappa}^2 \eta$ are in the $\tmf$-Hurewicz image, and $\kappa$ and $\eta$ are simple $2$-torsion in the stable stems. We do not know similar factorizations of the other elements of \cref{MT:tordiv}, so our computations provide the only proof we know that these elements admit simple $2$-torsion lifts. 
\end{remark}

\subsection*{Organization of the paper} In \Cref{SS:lift}, we perform the technical  $\tmf$-homology calculations which are necessary in 
\Cref{Sec:A1}  to prove  \Cref{MT:Families} and \Cref{MT:KT2}. 

While reading this paper, the reader may  find \cite[Part I, Ch. 12]{DFHH14} convenient for looking up the homotopy groups of $\tmf$, where the generators in the Hurewicz image are marked with colored dots.  We refer to Figures 8, 9, 21 and 22 in \cite{BBPX22} for explicit descriptions of $\tmf_*\M$ and $\tmf_*\Y$. 

\subsection*{Acknowledgments}
The authors would like to thank Mark Behrens, Guozhen Wang, and Zhouli Xu for helpful discussions, Bob Bruner and Dan Isaksen for their predictions in \Cref{rmk:predict}, and  Bert Guillou and Haynes Miller for pointing out consequential typos and for their comments on earlier versions of this paper. We also thank the anonymous referees for carefully reading our paper and for their feedback, suggestions and corrections. 

The research for this paper has been supported by the National Science Foundation through the grants DMS-2039316, DMS-2135884, DMS-2239362, DMS-2305016, DMS-2414922, and DMS-2441241.

\section{Some $\tmf$-homology calculations}  \label{SS:lift}

Using our knowledge of $\tmf_*$ \cite{Bau08, DFHH14}, $\tmf_*\M$, $\tmf_*\Y$, and $\tmf_*\A_1$ \cite{BBPX22, Pha23} we will compute the maps $i_k$ and $p_k$ in the long exact sequences
\begin{equation} \label{tmfLES1}
\begin{tikzcd}
\cdots \arrow{r} & \tmf_{k} \Y \rar["i_3"] &\tmf_{k} \A_1 \rar["p_3"] & \tmf_{k-3}\Y \rar["v_*"] & \cdots.
\end{tikzcd}
\end{equation}
\begin{equation} \label{tmfLES2}
\begin{tikzcd}
\cdots \arrow{r} & \tmf_{k-3} \M \rar["i_2"] &\tmf_{k-3} \Y \rar["p_2"] & \tmf_{k-5}\M \rar["\eta_*"] & \cdots.
\end{tikzcd}
\end{equation}
\begin{equation} \label{tmfLES3}
\begin{tikzcd}
\cdots \arrow{r} & \tmf_{k-5}  \rar["i_1"] &\tmf_{k-5} \M \rar["p_1"] & \tmf_{k-6} \rar["2"] & \cdots 
\end{tikzcd}
\end{equation}
associated to the cofiber sequences \eqref{C1}, \eqref{C2} and \eqref{C3}, respectively.  This is the technical core of the paper and requires   careful bookkeeping using Adams--Novikov spectral sequences. 

An element $y \in \tmf_{k-3} \Y$ is in the image of $p_3$ for some version of $\A_1$ if and only if \[ v_1 \cdot y = 0 \in \tmf_{k-1} \Y\]  for a choice of $v_1$. The action of all choices of $v_1$   has been identified on generators of  $\tmf_*\Y$ in  \cite[Figs. 22, 23]{BBPX22}. Thus,  the image of $p_3$ is easily determined; we list these elements in the leftmost column of  \cref{Table:A1lifts}.
 \begin{remark} We note that the ambiguity surrounding the action of $v_1$ on $\tmf_*\Y$, as presented in \cite[Remark 6.42]{BBPX22}, does not impact the computed list of $v_1$-torsion elements in $\tmf_*\Y$. 
 \end{remark}
\subsection{$v_1$-periodic families} \ 

We define 
\begin{equation} \label{tmfKOv1}
 v_1^{-1} \tmf  := {\colim} \  \{ \tmf \overset{c_4} \longrightarrow \Sigma^{-8} \tmf \overset{c_4} \longrightarrow \Sigma^{-16} \tmf \overset{c_4} \longrightarrow \dots  \} \simeq \mr{KO}[\mathfrak{j}^{-1}],     
 \end{equation}
where $\mathfrak{j}^{-1} = \Delta/ c_4^3$  and $\mr{KO}$ is the $2$-local periodic real $\mr{K}$-theory. The  equivalence above follows from  \cite[Corollary 3]{Lau04}. 
\begin{remark} The paper \cite{Lau04} establishes that the spectrum  $v_1^{-1} \tmf $ is $\mr{K}(1)$-locally equivalent to $\mr{KO}[\mathfrak{j}^{-1}]$, or equivalently, the equivalence of \eqref{tmfKOv1} holds after a $2$-adic completion. However,  the claimed $2$-local equivalence of \eqref{tmfKOv1} can be deduced from \cite[Corollary 3]{Lau04} by applying the arithmetic fracture square of \cite[Proposition 2.9]{BousfieldLoc}. The difference between $2$-local and $2$-complete settings is immaterial for our  subsequent calculations. We choose to proceed with the $2$-local settings in order to be consistent with our use of elliptic spectral sequences (see \Cref{notn:elliptic}) which converge to $2$-local $\tmf$-homology groups. 
\end{remark}

\begin{definition}  \label{defn:v1tor} For any spectrum $\mr{X}$,  define the $v_1$-torsion part of  $\tmf_* \mr{X}$ as the kernel 
\[ 
\tmf_* (\mr{X})^{\sf tor} := 
\ker \left(
\begin{tikzcd}
\ell: \tmf_* \mr{X} \rar & v_1^{-1} \tmf_* \mr{X}
\end{tikzcd}
\right)
 \]
 of the $v_1$-localization map. 
\end{definition}

\begin{definition} For any spectrum $\mr{X}$,  define the $v_1$-periodic part of  $\tmf_*\mr{X}$ as the cokernel 
\[ 
\tmf_* (\mr{X})^{\sf per} := \coker \left(
\begin{tikzcd}
   \tmf_* (\mr{X})^{\sf tor} \rar[hook] & \tmf_* \mr{X}
\end{tikzcd}
\right)
 \]
 of the natural  inclusion map.
\end{definition}

The homotopy groups $\tmf_*$, $\tmf_*\mr{M}$, and $\tmf_*\mr{Y}$ have nontrivial $v_1$-torsion and $v_1$-periodic parts. In $\tmf_*$, the $v_1$-periodic part consists of those elements which are $c_4$-torsion free; these appear, for example, near the horizontal axis in the picture on \cite[Pg. 190]{DFHH14}. In $\tmf_*\mr{M}$, the $v_1$-periodic part consists of the families of elements represented by open circles in \cite[Figs. 8, 9]{BBPX22}, and in $\tmf_*\mr{Y}$, the $v_1$-periodic part consists of all elements in filtration zero in \cite[Figs. 21, 22]{BBPX22}.

 On the other hand, the $v_1$-periodic part of $\tmf_*\mr{A}_1$ is trivial, since $\mr{A}_1$ is a type 2 spectrum. Hence, any element in $\tmf_*\mr{A}_1$ is $v_1$-torsion. 

Since $\tmf_*\mr{A}_1$ is the starting point of our computations,  the classes we are interested in are actually in the $v_1$-torsion part, so we will ignore the $v_1$-periodic part of the homotopy in \eqref{tmfLES1}. The discussion in the next few pages makes this precise. 

\begin{lemma} \label{lem:map-to-tor} For any nonzero element $a \in \tmf_* \A_1$  we have
\begin{enumerate}[(a)]
\item $p_3(a) \in \tmf_*(\Y)^{\sf tor}$, 
\item $p_2(p_3(a)) \in \tmf_*(\M)^{\sf tor}$, and 
\item $p_1(p_2(p_3(a)) \in \tmf_*(\SS)^{\sf tor}$.
\end{enumerate} 
\end{lemma}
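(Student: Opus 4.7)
The plan is to reduce all three parts to a single observation: each pinch map $p_i$ is induced by a map of spectra and therefore is a $\tmf_*$-module homomorphism on $\tmf$-homology, hence commutes with multiplication by $c_4 \in \tmf_8$. Since the $v_1$-localization in \eqref{tmfKOv1} is defined as the colimit of iterated $c_4$-multiplication, unwinding the colimit shows that an element lies in $\tmf_*(\mr{X})^{\sf tor}$ if and only if it is annihilated by some power of $c_4$.

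With this reformulation in place, the discussion immediately preceding the lemma supplies the key input: because $\A_1$ is type $2$, the $v_1$-periodic part of $\tmf_*\A_1$ vanishes, equivalently, every $a \in \tmf_*\A_1$ is killed by $c_4^N$ for some $N = N(a)$. For part (a), $\tmf_*$-linearity of $p_3$ gives
\[ c_4^N \cdot p_3(a) = p_3(c_4^N \cdot a) = 0 \in \tmf_*\Y, \]
so $p_3(a) \in \tmf_*(\Y)^{\sf tor}$. Parts (b) and (c) follow by the same one-line argument applied in turn to $p_2$ and $p_1$, since a $\tmf_*$-linear map automatically carries $c_4$-nilpotent classes to $c_4$-nilpotent classes.

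I do not expect any genuine obstacle here. The lemma is really asserting the principle that $v_1$-torsion propagates down the tower \eqref{C1}--\eqref{C3} starting from the type $2$ object $\A_1$ at the top. The only point worth flagging is the $\tmf_*$-linearity of the pinch maps, which is automatic since they are induced from maps of spectra and $\tmf \wedge (-)$ is a module-valued functor, so no separate verification is required.
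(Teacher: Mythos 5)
Your proof is correct and is essentially the paper's own argument made explicit: the paper simply observes that $v_1$-torsion elements cannot map to $v_1$-periodic elements under maps of spectra and that every element of $\tmf_*\A_1$ is $v_1$-torsion since $\A_1$ is type $2$, which is exactly the naturality you unwind via the $c_4$-nilpotence characterization of $\tmf_*(\mr{X})^{\sf tor}$ and the $\tmf_*$-linearity of the pinch maps.
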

\begin{proof}
	The statement of this lemma follows from the observation that $v_1$-torsion elements cannot map to $v_1$-periodic elements. Since  any $a\in \tmf_*\mr{A}_1$ is $v_1$-torsion, we have that $p_3(a)$ must be a $v_1$-torsion element. We repeat this argument to prove parts (b) and (c). 
%
%
\end{proof}
\subsection{From $\tmf_*\Y$ to $\tmf_*\M$} \label[section]{sec22}   \ 

We begin our computations by considering the $v_1$-periodic classes in the long exact sequence \eqref{tmfLES2}.  Utilizing the known equivalences  $v_1^{-1} \tmf \simeq \mr{KO}[\mathfrak{j}^{-1}]$ of \eqref{tmfKOv1}, and  the fact that $\mr{KO} \sma  \Y \simeq \mr{K}(1)$ \cite{DMv1v2}, we conclude:
\begin{equation} \label{v1tmfY} 
v_1^{-1}\tmf_* \Y \cong \mathbb{F}_2[v_1^{\pm 1},  \Delta]. 
\end{equation}
By comparing  this with the $v_1$-periodic part of $\tmf_*\Y$ found in the zero line of \cite[Figs. 21, 22]{BBPX22}, we conclude: 
\[\tmf_*\Y^{\sf per}\cong \mathbb{F}_2[v_1, \Delta^8] \{1, \Delta v_1,  \Delta^2 v_1^2, \Delta^3 v_1^3,  \Delta^4v_1, \Delta^5v_1^2, \Delta^6v_1^3, \Delta^7v_1^4\}.\]
Likewise, using the known calculation of $\mr{KO}_*\M$ we have 
\begin{equation} \label{v1tmfM}
 v_1^{-1}\tmf_* \M \cong \mr{KO}_*\M [\Delta]
 \end{equation}
 and comparing it with \cite[Figs. 8, 9]{BBPX22} one can easily deduce the  $v_1$-periodic part of $\tmf_*\M$, namely $\tmf_*\M^{\sf per}$. 
For example, the $v_1$-periodic part of $\tmf_*\M$ consists of classes in filtrations 0,1 and 2 in \cite[Figs. 8, 9]{BBPX22} which belong to a series of typical ``lightning flash'' patterns. However,  describing $\tmf_*\M^{\sf per}$ with a closed formula is challenging.  Specifically, while every class denoted by an open circle is $v_1$-periodic, certain other classes (such as $\Delta^2 \eta^2$) represented by solid bullets in the same figures are  also $v_1$-periodic.  Our precise calculations are detailed in the subsequent two remarks.

\begin{remark} As an $\mathbb{F}_2$-vector space,  $\mr{KO}_*\M \cong \mr{L}[v_1^{\pm4}]$, where $\mr{L}$  is the graded vector space constituting the  ``lightning flash" pattern: 
	
	\begin{figure}[h] 
		\begin{center}
			\begin{tikzpicture}[scale = .6, 
				open circle/.style={circle, draw=black, fill=white, inner sep=2pt}
				]
				\draw (0,0) -- (1,1) -- (2,2) -- (2,0) -- (3,1) -- (4,2);
				\node[open circle, label={ left: $\iota$}]  at (0,0) {};
				\node[open circle, label={ left: $\eta \cdot \iota$}] at (1,1 ) {};
				\node[open circle, , label={ left: $\eta^2 \cdot \iota$}] at (2,2) {};
				\node[open circle, label={ below: $v_1 \cdot \iota$}] at (2,0) {};
				\node[open circle, label={ right: $v_1 \eta \cdot \iota$}]  at (3,1) {};
				\node[open circle,  label={ right: $v_1 \eta^2 \cdot \iota$}] at  (4,2) {};
			\end{tikzpicture}
		\end{center}
		\vspace{-10pt}
		\caption{The lightning flash pattern $\mr{L}$}
	\end{figure}
	where $v_1 \cdot \iota$ is the Toda bracket $\langle \eta, 2, \iota \rangle$. 
	Thus, one may describe $v_1^{-1}\tmf_* \M$ as $\mr{L}[v_1^{\pm 4}][\Delta]$. 
\end{remark}
 \begin{remark} \label{rem:peranomaly}
By comparing  \eqref{v1tmfM} with \cite[Figs. 8, 9]{BBPX22}, we observe that the entire lightning flash pattern on generators $\Delta^i v_1^{4k}$ lifts along the injection 
\[ 
\begin{tikzcd} 
\tmf_*\M^{\sf per} \rar[hook] & v_1^{-1}\tmf_*\M
\end{tikzcd}
\]
provided $k \geq 1$. When $k=0$,  only a fraction of the lightning flash pattern on powers of $\Delta$ lift to  $\tmf_*\M^{\sf per}$. This pattern is periodic when multiplied by $\Delta^8$. 

Since $v_1$-torsion elements do not map to $v_1$-periodic elements, we have a commutative diagram 
\begin{equation} \label{eqn:pertorexact}
\begin{tikzcd}
\dots \rar &  \tmf_{k-3}\M^{\sf tor} \rar["i_2"] \dar[hook]  & \tmf_{k-3} \Y^{\sf tor} \rar["p_2"] \dar[hook] & \tmf_{k-5} \M^{\sf tor} \rar \dar[hook]  & \dots  \\
\dots \rar &  \tmf_{k-3}\M \rar["i_2"] \dar[two heads]  & \tmf_{k-3} \Y \rar[ "p_2"] \dar[two heads] & \tmf_{k-5} \M \rar \dar[two heads]  & \dots  \\
\dots \rar &  \tmf_{k-3}\M^{\sf per} \rar["i_2"'] & \tmf_{k-3} \Y^{\sf per} \rar["p_2"'] & \tmf_{k-5} \M^{\sf per} \rar & \dots  
\end{tikzcd}
\end{equation}
where the vertical sequences are short exact, and among the horizontal sequences, only  the one in the middle is long exact. 

\end{remark}

\begin{lemma}\label[lemma]{lem:new}
In the $v_1$-periodic sequence given by the bottom row of  \eqref{eqn:pertorexact}
\[	\begin{tikzcd}
\cdots \ar[r, "\eta_{\ast-1}"] & \tmf_{\ast} \M^{\sf per} \rar["i_2"] &\tmf_{\ast} \Y^{\sf per} \rar["p_2"] & \tmf_{\ast -2}\M^{\sf per} \rar["\eta_{\ast-2}"] & \cdots,
\end{tikzcd} \]
the maps $i_2$ and $p_2$ are determined by the following equations whenever the element in the domain is defined:
\begin{align*}
	i_2(\Delta^n v_1^{4k})&=\Delta^{n} v_1^{4k} \\
	i_2(\Delta^n v_1^{4k+1})&=\Delta^{n} v_1^{4k+1} \\
	&p_2(\Delta^n v_1^{4k+2}) = \Delta^n v_1^{4k}\eta^2 \\
	&p_2(\Delta^n v_1^{4k+3})= \Delta^n v_1^{4k+1} \eta^2
\end{align*}
Consequently,
\begin{itemize}
\item[-] $\mr{img}(p_2) = \ker(\eta_{\ast-2})$,
\item [-] $ \mr{img}(i_2) = \ker (p_2)$,
\item[-] and the  elements  of $\tmf_*\M^{\sf per}$ in 
\[ \mr{E}^\M := \{ \Delta \eta, \Delta^2 \eta^2, \Delta^2 v_1 \eta,  \Delta^3 v_1 \eta^2, \Delta^4 \eta,  \Delta^5 \eta^2, \Delta^5 v_1 \eta, \Delta^6 v_1 \eta^2 \} \]
and their $\Delta^8$-multiples are neither in the image of $\eta_*$ nor map to a nonzero  element under $i_2$. 
\end{itemize}
\end{lemma}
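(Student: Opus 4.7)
The plan is to verify the four formulas for $i_2$ and $p_2$ using a combination of $v_1^4$-linearity (inherited from the $\tmf_*$-module structure, since $c_4 \in \tmf_{-8}$ acts as $v_1^4$ on $v_1$-periodic parts by \eqref{tmfKOv1}) and the $\Delta^8$-periodic description of $\tmf_*\Y^{\sf per}$ given above. This reduces the problem to a finite check, which is made manageable by passing to the $v_1$-inverted setting where the maps are the classical $\mr{KO}$-level computation. The three consequences then follow by direct inspection of the formulas together with the explicit pictures of $\tmf_*\M^{\sf per}$ and $\tmf_*\Y^{\sf per}$.

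\textbf{Verifying the formulas.} After inverting $v_1$ we have $v_1^{-1}\tmf_*\Y \cong \mathbb{F}_2[v_1^{\pm 1}, \Delta]$ and $v_1^{-1}\tmf_*\M \cong \mr{L}[v_1^{\pm 4}][\Delta]$. In this localized setting, $i_2$ sends the bottom-line generators $\iota, v_1\iota$ to $1, v_1$ (the behavior of $\mr{KO}_*\M \to \mr{KO}_*\Y$ on the unit), while $p_2$ is the connecting map, which on the lightning flash sends $v_1^2$ and $v_1^3$ to $\eta^2 \iota$ and $v_1 \eta^2 \iota$ respectively. Multiplying through by $\Delta^n$ and applying $v_1^4$-periodicity yields the four displayed formulas at the $v_1$-inverted level. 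To lift back to $\tmf_*\M^{\sf per}$ and $\tmf_*\Y^{\sf per}$, one verifies directly from \cite[Figs.~8, 9, 21, 22]{BBPX22} that every source and target class of the formulas actually lies in the $v_1$-periodic part (not merely in its $v_1$-inversion).

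\textbf{The exactness bullets.} The first consequence, $\mr{img}(p_2) = \ker(\eta_{*-2})$, is immediate from the formulas: $\mr{img}(p_2)$ is spanned by $\eta^2$-multiples of $v_1^{4k}$ and $v_1^{4k+1}$ (times appropriate powers of $\Delta$), each of which is $\eta$-annihilated because $\eta^3 = 0$; and conversely every $\eta$-annihilated element of $\tmf_{*-2}\M^{\sf per}$ lies in this span by a bidegree count in \cite[Figs.~8, 9]{BBPX22}. Likewise, $\ker(p_2)$ on $\tmf_*\Y^{\sf per}$ is spanned exactly by the $v_1^{4k}$- and $v_1^{4k+1}$-classes, which equals $\mr{img}(i_2)$ by the formulas.

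\textbf{The main obstacle: the anomalous classes.} The hardest part is the third bullet, describing the elements of $\mr{E}^\M$ that obstruct exactness of the $v_1$-periodic sequence at $\tmf_*\M^{\sf per}$. These eight classes (together with their $\Delta^8$-multiples) are precisely the \emph{orphan} lightning-flash fragments identified in \Cref{rem:peranomaly}: classes which lie in $\tmf_*\M^{\sf per}$ but whose $\eta$-preimage within a complete lightning flash is only $v_1$-torsion and hence absent from $\tmf_*\M^{\sf per}$. For each element of $\mr{E}^\M$ I would check directly from the figures that (a) the only possible $\eta$-preimage (for instance $\Delta$ in the case of $\Delta\eta$) is $v_1$-torsion and so not in $\mr{img}(\eta_*)$; and (b) the candidate target bidegree of $i_2$ in $\tmf_*\Y^{\sf per}$ contains no nonzero class, forcing $i_2$ to annihilate the element. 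The $\Delta^8$-multiples are then handled by the $\mathfrak{j}^{-1}$-periodicity coming from \eqref{tmfKOv1}, completing the proof.
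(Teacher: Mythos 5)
Your proposal is correct and follows essentially the same route as the paper: invert $v_1$, read off the maps from the known $\mr{KO}$-homology long exact sequence for the cofiber sequence \eqref{C2} using the identifications \eqref{v1tmfY} and \eqref{v1tmfM}, and then transfer the answer to the periodic quotients via the injections $\tmf_*\M^{\sf per} \hookrightarrow v_1^{-1}\tmf_*\M$ and $\tmf_*\Y^{\sf per} \hookrightarrow v_1^{-1}\tmf_*\Y$ together with the lifting analysis of \Cref{rem:peranomaly}. Your treatment of the classes in $\mr{E}^\M$ just spells out in more detail the step the paper compresses into ``combining this with our observations in \Cref{rem:peranomaly}.''
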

\begin{proof} 
Using \eqref{v1tmfY}, \eqref{v1tmfM}, and our knowledge of  the $\mr{KO}$-homology long exact sequence associated to \eqref{C2}, one observes the behavior within the long exact sequence obtained by $v_1$-localizing \eqref{tmfLES2}:
\[ 
	\begin{tikzcd}
		\cdots \arrow{r} &v_1^{-1} \tmf_{\ast} \M \rar["i_2"] &v_1^{-1}\tmf_{\ast} \Y \rar["p_2"] & v_1^{-1}\tmf_{\ast-2}\M \arrow{r} &\cdots
	\end{tikzcd} 
\]
The maps are explicitly given by:
	\begin{align*}
	i_2(v_1^{4k}\Delta^i)&=v_1^{4k} \Delta^i \\
	i_2(v_1^{4k+1}\Delta^i) &=v_1^{4k+1} \Delta^i\\
	&p_2(v_1^{4k+2} \Delta^i) = v_1^{4k}\eta^2 \Delta^i\\
	&p_2(v_1^{4k+3} \Delta^i) =v_1^{4k+1} \eta^2 \Delta^i.
	\end{align*}
Combining this with our observations in \Cref{rem:peranomaly}, we get the result. 
\end{proof}

We say that an element of $\tmf_*(\mr{X})$ is $v_1$-periodic if it is not an element of the $v_1$-torsion submodule (as defined in \Cref{defn:v1tor}). A $v_1$-periodic element is precisely one that maps to a nonzero element in the $v_1$-periodic quotient module, $\tmf_*(\mr{X})^{\sf per}$. It is important to keep in mind that the set of $v_1$-periodic elements does not constitute a submodule.
\begin{notn}
Let ${\bf E}^\M := \mathbb{F}_2 \{ x \cdot \Delta^{8i}: x \in \mr{E}^\M \text{ and } i \in \NN \} $.
\end{notn}
\begin{definition}[Exceptional $v_1$-periodic elements]
 \label{defn:exception1}	
We call a $v_1$-periodic element of $\tmf_*\M$ \emph{exceptional} in \eqref{tmfLES2} if its image under the quotient map $\tmf_*\M \twoheadrightarrow \tmf_*\M^{\sf per}$  is a nonzero element of ${\bf E}^{\M}$. Otherwise, we call it a \emph{non-exceptional} $v_1$-periodic element. 
\end{definition}
\begin{thm} \label{exceptional} \emph{ A $v_1$-periodic element $m \in \tmf_*\M$  is exceptional in \eqref{tmfLES2}  if and only if $i_2(m)$ is a  nonzero $v_1$-torsion element.}
\end{thm}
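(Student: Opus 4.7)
The plan is to exploit the commutative diagram \eqref{eqn:pertorexact} together with the explicit computations in \cref{lem:new}. The key reformulation is that for any $m \in \tmf_*\M$, the class $i_2(m)$ lies in $\tmf_*\Y^{\sf tor}$ if and only if $i_2^{\sf per}([m]) = 0$ in $\tmf_*\Y^{\sf per}$, where $[m]$ denotes the image of $m$ under the quotient map $\tmf_*\M \twoheadrightarrow \tmf_*\M^{\sf per}$. This is immediate from the commutativity (and right-exactness of the columns) of \eqref{eqn:pertorexact}.

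For the ``only if'' direction, I would assume $m$ is exceptional, so $[m]$ is a nonzero element of $\mathbf{E}^{\M}$. By \cref{lem:new}, every element of $\mathbf{E}^{\M}$ lies in $\ker(i_2^{\sf per})$, whence $i_2(m) \in \tmf_*\Y^{\sf tor}$ by the reformulation above. To see that $i_2(m) \neq 0$, I would argue by contradiction: if $i_2(m) = 0$, then exactness of the middle row of \eqref{eqn:pertorexact} supplies $m' \in \tmf_{*-1}\M$ with $m = \eta_*(m')$; projecting to $\tmf_*\M^{\sf per}$ gives $[m] = \eta_*^{\sf per}([m'])$, placing $[m]$ in $\mathrm{img}(\eta_*^{\sf per})$. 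This contradicts \cref{lem:new}, which asserts that $\mathbf{E}^{\M}$ is disjoint from $\mathrm{img}(\eta_*^{\sf per})$.

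For the ``if'' direction, assume $m$ is $v_1$-periodic and $i_2(m)$ is a nonzero element of $\tmf_*\Y^{\sf tor}$. Then $[m] \neq 0$ in $\tmf_*\M^{\sf per}$ and $i_2^{\sf per}([m]) = 0$. I would then appeal to the explicit formulas for $i_2^{\sf per}$ in \cref{lem:new}: the classes $\Delta^n v_1^{4k}$ and $\Delta^n v_1^{4k+1}$ map to nonzero basis elements of $\tmf_*\Y^{\sf per}$, so the only contributions to $[m]$ come from $\eta$- and $\eta^2$-multiples in the presentation of $\tmf_*\M^{\sf per}$. These split as $\mathrm{img}(\eta_*^{\sf per}) \oplus \mathrm{span}(\mathbf{E}^{\M})$, since \cref{lem:new} identifies $\mathbf{E}^{\M}$ precisely as the $\eta$-divisible periodic classes not arising from $\eta_*^{\sf per}$. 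Because $i_2 \circ \eta_* = 0$ in the middle row, modifying $m$ by any class $\eta_*(m'')$ preserves $i_2(m)$ and does not change the $\mathbf{E}^{\M}$-component of $[m]$. One is thus reduced to the case $[m] \in \mathbf{E}^{\M}$, certifying that $m$ is exceptional.

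The main obstacle is the ``if'' direction, specifically the direct-sum decomposition $\ker(i_2^{\sf per}) = \mathrm{img}(\eta_*^{\sf per}) \oplus \mathrm{span}(\mathbf{E}^{\M})$, which must be extracted from the descriptions of $\tmf_*\M^{\sf per}$ via the lightning-flash presentation $\mathrm{L}[v_1^{\pm 4}][\Delta]$ and the $\Delta^8$-periodicity of the exceptional pattern noted in \cref{rem:peranomaly}. Verifying this decomposition amounts to bookkeeping the eight classes of $\mathrm{E}^{\M}$ listed in \cref{lem:new} against the $\Delta$-tower of $\eta$-multiples in $\tmf_*\M$, and confirming that each non-exceptional $\eta$-multiple in $\tmf_*\M^{\sf per}$ is expressible as $\eta_*^{\sf per}$ of a periodic class.
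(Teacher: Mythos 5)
Your argument is essentially the paper's: both directions rest on the commutative square comparing $i_2$ with its induced map on ${}^{\sf per}$-quotients together with the description of $\mr{E}^{\M}$ in \cref{lem:new}, and your ``only if'' direction reproduces the paper's verbatim (in fact more accurately, since the paper's contradiction should read ``image of $\eta_*$'' rather than ``image of $i_2$''). The only divergence is in the ``if'' direction: the paper argues the contrapositive in one line (a non-exceptional periodic class has $i_2(q(m))\neq 0$ by \cref{lem:new}), whereas you argue directly via the decomposition $\ker(i_2^{\sf per})=\mr{img}(\eta_*^{\sf per})\oplus\mathrm{span}({\bf E}^{\M})$ and a reduction modulo $\eta_*$-images; be aware that this reduction certifies that $m-\eta_*(m'')$ is exceptional, not that $m$ itself satisfies \cref{defn:exception1} (if $q(m)$ has both a nonzero ${\bf E}^{\M}$-component and a nonzero $\mr{img}(\eta_*^{\sf per})$-component, $m$ is literally non-exceptional yet $i_2(m)$ is still nonzero $v_1$-torsion). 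This is the same imprecision already present in the paper's contrapositive, and it is harmless where the theorem is used --- the relevant statements live in $\mr{C}_*=\tmf_*\M/\mr{img}(\eta_*)$, where your reduction is exactly the right move --- so your proof is as correct as the paper's and follows the same route.
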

\begin{proof} Suppose $m$ is a $v_1$-periodic element in $\tmf_*\M$. Then, the result follows from analyzing the commutative diagram with surjective vertical maps: 
\[ 
\begin{tikzcd}
\tmf_{*-1}\M \rar[ "\eta_{*-1}"] \dar[two heads, "q"'] & \tmf_*\M \rar["i_2"] \dar[two heads, "q"'] & \tmf_*\Y \dar[two heads, "q'"] \\
\tmf_{*-1}\M^{\sf per} \rar[ "\eta_{*-1}"]  & \tmf_*\M^{\sf per} \rar["i_2"] & \tmf_*\Y^{\sf per}
\end{tikzcd}
\]
($\Rightarrow$) Suppose  $m$ is an exceptional $v_1$-periodic element. Then $q (m) \in {\bf E}^{\M}$ and  \[ q' (i_2(m)) = i_2(q(m)) = 0\]
by \Cref{lem:new}.    Therefore, $i_2(m) \in \mr{ker}(q') = \tmf_*\Y^{\sf tor} $. Furthermore, if  $i_2(m) = 0$ then it would imply that $ m = \eta_*(m')$ for some $v_1$-periodic element $m'$. Consequently, $q(m)$ will be in the image of $i_2$, which will contradict \Cref{lem:new}. 

($\Leftarrow$) Conversely, assume $m$ is a non-exceptional $v_1$-periodic element.  Then $q'(i_2(m)) = i_2(q(m)) \neq 0$ by \Cref{lem:new}. Thus $i_2(m)$ is a non-exceptional $v_1$-periodic element of $\tmf_*\Y$. 
\end{proof}

Now we are set to determine  the map $p_2$ on the classes within  $\mr{img}(p_3)$, which is a subspace of  $ \tmf_{*} \Y^{\sf tor}$ by  \Cref{lem:map-to-tor}. 
We  use the long exact sequence \eqref{tmfLES2} and the corresponding short exact sequence 
\begin{equation}\label{eqn:SESY}
\begin{tikzcd}
  \mr{C}_{k-3} \rar[hook, "i_2"] &\tmf_{k-3} \Y \rar[two heads, "p_2"] & \mr{K}_{k-5},
\end{tikzcd}
\end{equation}
 where $\mr{C}_{\ast}: = \tmf_{\ast}\M/ \mr{img}(\eta_*)$ is the cokernel of $\eta_*$ in \eqref{tmfLES2}, and 
$ \mr{K}_{\ast}= \ker (\eta_*) \subseteq \tmf_\ast \M$ is the kernel of $\eta_* $ in \eqref{tmfLES2}.

\begin{notn} We define $\mr{K}_{\ast}^{\sf tor}$ and  $\mr{C}_{\ast}^{\sf tor}$ as the kernel and the cokernel, respectively,  of the map $\eta_*$ restricted to  $\tmf_*\M^{\sf tor}$. By definition,  $\mr{K}_{\ast}^{\sf tor}$ is a subset of $ \mr{K}_{\ast}$ and $\mr{C}_{\ast}^{\sf tor}$ is a subset of $\mr{C}_{\ast}$.  
\end{notn}
Restricting \label{eqn:SESY} to $v_1$-torsion elements, we do not get an exact sequence. The failure is precisely due to the existence of exceptional $v_1$-periodic elements in \eqref{tmfLES2} which map to $v_1$-torsion elements. Therefore:
\begin{notn}
 We define $\mr{C}_{\ast}^{e}$ as the span of the $v_1$-torsion and exceptional $v_1$-periodic elements of $\mr{C}_{\ast}$ in \eqref{tmfLES2}. 
\end{notn} 
By \Cref{lem:new} and \Cref{exceptional}, we get a short exact sequence 
\begin{equation} \label{SESe}
\begin{tikzcd}
\mr{C}_{k-3}^{e} \rar[hook] &  \tmf_{*}\Y^{\sf tor} \rar[two heads] & \mr{K}_{k-5}^{\sf tor}
\end{tikzcd}
\end{equation}
which allows us to ignore non-exceptional  $v_1$-periodic elements  in \eqref{tmfLES2} in  the subsequent calculations. 
\begin{notn} \label{notn:elliptic}
In \cite{BBPX22}, the $\tmf$-homology of $\Y$ and $\M$ is calculated using the Adams--Novikov spectral sequence (ANSS):
\[ {^{(-)}\mr{E}}_2^{ s, t} := \mr{Ext}_{\Gamma}^{s,t} \left(\mr{A}, \pi_*(\tmf \wedge \mr{X}(4) \wedge (-) )\right) \implies \tmf_{t -s } (-), \label{eqn:ANSS}\] 
where the spectrum $\mr{X}(4)$ and the Hopf algebroid $(\mr{A}, \Gamma)$ are defined as in \cite[$\mathsection$2.1]{BBPX22}. We define the Adams--Novikov filtration of an element $x \in \tmf_* (\mr{X})$ to be $s$, denoted $\mr{AF}(x) = s$, if it is detected by an element
\[ \widehat{x} \in {^{\mr{X}}\mr{E}}_{2}^{s, * +s} \]
on the $\mr{E}_2$-page of the ANSS \eqref{eqn:ANSS}.

 We use $ {\sf s}_{ i,j}$, ${\sf m}_{ i,j}$, and $ {\sf y}_{i,j}$ to denote elements in $\tmf_*$, $\tmf_*\M$, and $\tmf_*\Y$, respectively, that are detected in ${\sf E}_{\infty}^{j, j+i}$ of the Adams--Novikov spectral sequence \eqref{eqn:ANSS},
 i.e. the elements ${\sf s}_{ i,j}$, ${\sf m}_{ i,j}$, and ${\sf y}_{ i,j}$ have Adams--Novikov filtration $j$ and stem $i$.
\end{notn}

\begin{remark}
In the bidegrees of interest, the only nonzero element present is either a $v_1$-torsion element or an exceptional $v_1$-periodic element. Consequently, $ {\sf s}_{ i,j}$, ${\sf m}_{ i,j}$, and $ {\sf y}_{ i,j}$ each represent a unique element up to a higher Adams--Novikov filtration.
\end{remark}

We rely on the calculation of $\tmf_*{\M}$ presented in \cite[Figs. 8, 9]{BBPX22} and employ several standard techniques in our analysis, which are  listed below: 
\begin{linearity}  \label{T3} The maps $i_2$ and $p_2$ in  \eqref{tmfLES2} and \eqref{eqn:SESY} are $\tmf_*$-linear, i.e., 
	\begin{enumerate}
		\item  $p_2(t \cdot y ) = t \cdot p_2(y)$,
		\item  $i_2 (t \cdot m) = t \cdot i_2(m)$ 
	\end{enumerate}
	for all $t \in \tmf_* $, 
	$m \in \tmf_* \M$   and $y \in \tmf_* \Y$. 
	In particular, we will often set $t$  as $\kappabar \in \pi_{20}\tmf$, and exploit the $\kappabar$-linearity of the  maps $i_2$ and $p_2$.
\end{linearity}
 
\begin{technique}[Vanishing $\mr{K}^{\sf tor}$] \label{T1}  If $\Kmr_{k-5}^{\sf tor} =0$ in  \eqref{eqn:SESY}, then \[ p_2(y) = 0\]
 for any $y \in \tmf_{k-3}(\Y)^{\sf tor}$. 
\end{technique}
\begin{proof}
Since a $v_1$-torsion element  never maps to a $v_1$-periodic element, we have a commutative diagram: 
\[ 
\begin{tikzcd}
\tmf_*\Y^{\sf tor} \rar["p_2"] \dar[hook] & \mr{K}^{\sf tor}_{*-2} \dar[hook] \\
\tmf_*\Y \rar[two heads, "p_2"] & \mr{K}_{*-2}.
\end{tikzcd}
\]
The result is a direct consequence of this. 
\end{proof}

\begin{application} 
We employ \Cref{T1} to  conclude that the  following elements  map to zero under the map $p_2: \tmf_{k-3}\Y \to \tmf_{k-5}\M$:
 \begin{multicols}{4}
 \begin{itemize}
 \item ${\sf y}_{3,1}$
 \item  ${\sf y}_{6, 2}$
 \item  $ {\sf y}_{14,2}$
 \item  ${\sf y}_{18,2}$
  \item ${\sf y}_{21,3}$
 \item ${\sf y}_{29,5}$
 \item ${\sf y}_{34,6}$
 \item ${\sf y}_{39,7}$
 \item  ${\sf y}_{40,6}$
  \item ${\sf y}_{51,1}$
  \item ${\sf y}_{54,2}$
 \item  ${\sf y}_{60,10}$
\item  ${\sf y}_{60,12}$
\item ${\sf y}_{65,7}$
\item ${\sf y}_{65,13}$
\item ${\sf y}_{66,2}$
\item ${\sf y}_{69,3}$
\item ${\sf y}_{75,13}$
\item ${\sf y}_{76,10}$
\item ${\sf y}_{80,16}$
\item ${\sf y}_{81,11}$
\item ${\sf y}_{86,12}$
\item ${\sf y}_{90,14}$
\item ${\sf y}_{91,13}$
\item ${\sf y}_{97,9}$

\item ${\sf y}_{112,12}$
\item ${\sf y}_{117,3}$
\item ${\sf y}_{123,11}$
 \item ${\sf y}_{150,2}$
 \item ${\sf y}_{161,7}$
 \item ${\sf y}_{165,3}$
 \end{itemize}
 \end{multicols}
Additionally, $\kappabar$-linearity implies that the following classes map to zero under the map $p_2: \tmf_{k-3}\Y \to \tmf_{k-5}\M$:
\begin{multicols}{4}
	\begin{itemize}
		\item ${\sf y}_{85,17}$
		 \item ${\sf y}_{96,14}$ 
		 	 \item ${\sf y}_{101,15}$
			 \item ${\sf y}_{105,21}$ 
\item ${\sf y}_{106,16}$
		 	  \item ${\sf y}_{111,17}$
\item ${\sf y}_{116,18}$
		 	  	 \item ${\sf y}_{117,13}$
		 	  	 \item ${\sf y}_{132,16}$
		 	 \item ${\sf y}_{137,17}$
		 \item ${\sf y}_{143,15}$	 
	\end{itemize}
	\end{multicols}
  \end{application}
 \begin{technique}[Vanishing $\mr{C}$] \label{T2}
 If $y \in\tmf_{k-3}( \Y)^{\sf tor}$ is a nonzero element and $\mr{C}_{k-3}^e = 0$,  then  \[ p_2(y) \neq 0\] 
 in \eqref{eqn:SESY}.
Further, if $\mr{rank}_{\FF_2}(\mr{K}^{\sf tor}_{k-5}) = 1$, then the image of  $y$ is  the unique nonzero element of $\mr{K}^{\sf tor}_{k-5}$. 
\end{technique}
\begin{application} We employ \Cref{T2} to  determine the following: 
 \begin{multicols}{2}
 \begin{itemize}
 \item $p_2({\sf y}_{8,2}) = {\sf m}_{6,2}$
 \item  $p_2({\sf y}_{11,3}) = {\sf m}_{9,3}$
 \item  $p_2({\sf y}_{23,3}) = {\sf m}_{21,3}$
 \item  $p_2({\sf y}_{26,4}) = {\sf m}_{24,6}$
 \item  $p_2({\sf y}_{44,8}) = {\sf m}_{42,10}$
 \item  $p_2({\sf y}_{59,3}) = {\sf m}_{57,3}$
 \item  $p_2({\sf y}_{62,2}) = {\sf m}_{60,12}$
  \item  $p_2({\sf y}_{74,4}) = {\sf m}_{72,6}$
  \item  $p_2({\sf y}_{77,5}) = {\sf m}_{75,13}$
  \item $p_2({\sf y}_{83,3}) = {\sf m}_{81,3}$
  \item  $p_2({\sf y}_{87,7}) = {\sf m}_{85,13}$
  	\item  $p_2({\sf y}_{88,6}) = {\sf m}_{86,12}$
    \item  $p_2({\sf y}_{92,8}) = {\sf m}_{90,10}$
    \item  $p_2({\sf y}_{93,3}) = {\sf m}_{91,9}$
     \item  $p_2({\sf y}_{98,4}) = {\sf m}_{96,6}$
    \item  $p_2({\sf y}_{119,3}) = {\sf m}_{117,3}$
  \item  $p_2({\sf y}_{127,15}) = {\sf m}_{125,21}$      
             
   \item  $p_2({\sf y}_{155,3}) = {\sf m}_{153,3}$
     \item  $p_2({\sf y}_{167,3}) = {\sf m}_{165,3}$
      \item  $p_2({\sf y}_{170,4}) = {\sf m}_{168,6}$
 \end{itemize}
 \end{multicols}
Together with $\kappabar$-linearity, these imply:
 \begin{multicols}{2}
 	\begin{itemize}
 		\item  $p_2({\sf y}_{82,6}) = {\sf m}_{80,16}$
  		\item $p_2({\sf y}_{103,7}) = {\sf m}_{101,7}$ 
 		\item $p_2({\sf y}_{113,7}) = {\sf m}_{111,13}$ 
  		\item $p_2({\sf y}_{118,8}) = {\sf m}_{116,10}$
	\item $p_2({\sf y}_{133,11}) = {\sf m}_{131, 17}$		
 		 \item $p_2({\sf y}_{138,12}) = {\sf m}_{136,14}$
 		 		\item $p_2({\sf y}_{153,15}) = {\sf m}_{151, 21}$, hence  \\ $p_2({\sf y}_{153,11}) = 0$ 
 		\item  $p_2({\sf y}_{158,16}) = {\sf m}_{156,18}$
 	\end{itemize}
 	\end{multicols}
\end{application}

Our next technique follows from the fact that maps of Adams--Novikov spectral sequences induced by maps of spectra  cannot decrease filtration.

\begin{technique}[Adams--Novikov filtration argument] \label{T5} 
	Consider the short exact sequence \eqref{SESe}.
	\begin{enumerate}[leftmargin=*]
		\item 
	Suppose that in \eqref{SESe} we have  
	\[\mr{C}_{k-3}^e \cong \FF_2\{\m_{k-3, a}\}, \ \ \  \tmf_{k-3}\Y^{\sf tor}  \cong \FF_2\{\y_{k-3, b}, \y_{k-3, c}\}, \ \ \  \mr{K}_{k-5}^{\sf tor} \cong \FF_2\{\m_{k-5, d}\},\] 
	where $a>c$ and $b>d$. Then
	\[i_2(\m_{k-3, a})=\y_{k-3, b} \qquad \text{and} \qquad p_2(\y_{k-3, c})=\m_{k-5, d}.\] 	
\item 
  Suppose that in \eqref{SESe} we have 
  \[\mr{C}_{k-3}^e=0, \ \ \ \tmf_{k-3}\Y^{\sf tor}=\FF_2\{{\sf y}_{k-3, a}, {\sf y}_{k-3, b}\},\ \ \ \mr{K}_{k-5}^{\sf tor}=\FF_2\{{\sf m}_{k-5, a}, {\sf m}_{k-5, c}\},\] 
  where $c\geq b \geq a$. Then 
	\[p_2(\y_{k-3, b})=\m_{k-5, c}  \qquad \text{ and } \qquad p_2(\y_{k-3, a})=\y_{k-3, a}.\]
		\end{enumerate}
\end{technique}
	\begin{proof}  The argument relies on the fundamental property that the maps in the short exact sequence $\eqref{SESe}$ are induced by maps of filtered objects and thus cannot decrease the filtration degree. 
	
{\bf Part (1)}: The constraint $a>c$ forces the highest filtration identification: $i_2({\sf m}_{k-3, a})= {\sf y}_{k-3, b}$. This implies $p_2({\sf y}_{k-3, b})=0$. The remaining required surjectivity for the exact sequence, coupled with the constraint $b>d$, establishes the second relation: ${\sf m}_{k-5, d}=p_2({\sf y}_{k-3, c})$.

{\bf  Part (2)}: With $\mr{C}_{k-3}^e=0$ and $c \geq b$, the map $p_2$ must send ${\sf y}_{k-3, b}$ to an element of Adams-Novikov filtration greater than or equal to $b$. Thus, unambiguously $$p_2({\sf y}_{k-3, b})={\sf m}_{k-5, c}.$$

Then we have  either $p_2({\sf y}_{k-3, a})= {\sf m}_{k-5, a}$ or $p_2({\sf y}_{k-3, a})= {\sf m}_{k-5, a}+ {\sf m}_{k-5, c}$. In the second case, we simply adjust the  basis for $\mr{K}_{k-5}^{\sf tor}$ so that $p_2({\sf y}_{k-3, a})= {\sf m}_{k-5, a}$.  
This basis adjustment is valid because ${\sf m}_{k-5, a}$ is only uniquely determined up to elements of strictly higher filtration.
 \end{proof} 
\begin{application}  \ 
	We use part (1) of \Cref{T5}  to conclude that:
	\begin{itemize}
		\begin{multicols}{2}
		\item  $p_2({\sf y}_{6, 2}) = 0$, 
				\item  $p_2({\sf y}_{20, 2}) = m_{18,2}$, 
				\item $p_2({\sf y}_{35, 3}) = {\sf m}_{33,3}$,
				\item  $p_2({\sf y}_{45,3}) = {\sf m}_{43,9}$ and\\ $p_2({\sf y}_{45,9}) = 0$,
\item $p_2({\sf y}_{55,7}) = {\sf m}_{53,7}$, 
\item  $p_2({\sf y}_{57,11}) = 0$,	
\item  $p_2({\sf y}_{71,9})=0$ and \\$p_2({\sf y}_{71,3}) = {\sf m}_{69,3}$,
\item $p_2({\sf y}_{102,10}) = {\sf m}_{100,20}$ and \\$p_2({\sf y}_{102,2}) = 0$
	\item $p_2({\sf y}_{122,14}) = 0$ and \\$p_2({\sf y}_{122,4}) = {\sf m}_{120,6}$. 
\end{multicols}
	\end{itemize}
We use part (2) of  \Cref{T5}  to conclude that:	
\begin{itemize}
	\begin{multicols}{2}
\item  $p_2({\sf y}_{56,2}) =  {\sf m}_{54,2}$ and \\
$p_2({\sf y}_{56,6}) =  {\sf m}_{54,6}$
\item $p_2({\sf y}_{68, 2}) = {\sf m}_{66,2}$ 
	\item $p_2({\sf y}_{107,11}) = {\sf m}_{105,17}$  and\\ $p_2({\sf y}_{107,3}) = {\sf m}_{105,3}$
	\item $p_2({\sf y}_{108, 10}) = {\sf m}_{106,16}$
\end{multicols}
\end{itemize}
Using $\kappabar$-linearity, these imply:
\begin{itemize}
		\begin{multicols}{2}
				\item $p_2({\sf y}_{128,14}) =\m_{126,20}$
	\item $p_2({\sf y}_{142,18}) =0$
		\item $p_2({\sf y}_{148,18}) =0$.
		\item $p_2({\sf y}_{168,22}) = 0$.
		\end{multicols}
\end{itemize}	
\end{application}

\begin{technique}[Extended $\bar{\kappa}$-linearity argument]\label{T6}
In the long exact sequence \eqref{tmfLES2}, we have:
\begin{itemize}
	\item $p_2(\y_{50,6})=0$ and  $p_2(\y_{50,4})=\m_{48,6}$
	\item $p_2(\y_{70,10})=0$ and  $p_2(\y_{70,8})=\m_{68,10}$
\end{itemize}
\end{technique}

\begin{proof}	We consider the short exact sequence \eqref{SESe} 
		\begin{equation}
		\begin{tikzcd}
			\mr{C}_{50}^{e} \rar[hook, "i_2"] &  \tmf_{50}\Y^{\sf tor} \rar[two heads, "p_2"] &  \mr{K}_{48}^{\sf tor}, 
		\end{tikzcd}
	\end{equation}
	where  $\mr{C}_{50}^e=\FF_2\{\m_{50,6}\}$, $\tmf_{50}\Y^{\sf tor}=\FF_2\{\y_{50,4}, \y_{50,6}\}$, and  $ \mr{K}_{48}^{\sf tor}=\FF_2\{\m_{48,6}\}$.
	
To determine the map $p_2$, we utilize the relations $\y_{50,6}=\kappabar  \cdot \y_{30,2}$ and $p_2(\y_{30,2})=\m_{28,6}$. By $\kappabar$-linearity,  we 
have $p_2(\y_{50,6})=\kappabar \cdot  \m_{28,6}.$ Since $\kappabar$ has Adams--Novikov filtration $4$, and $\m_{28,6}$ has filtration $6$, the class $\kappabar \cdot \m_{28,6}$ must have filtration 10 or greater. Since $\mr{K}_{48}^{\sf tor}$ is one-dimensional and $p_2$ is surjective, the remaining basis element must map to the generator. We define ${\sf y}_{50,4}$ to be the element in $\tmf_{50}\Y^{\sf tor}$ such that $p_2({\sf y}_{50,4}) = {\sf m}_{48,6}$. This choice is unique up to adding a multiple of ${\sf y}_{50,6}$, which corresponds to modifying ${\sf y}_{50,4}$ by an element of higher Adams–Novikov filtration.
	 
The second case follows from the first case by $\kappabar$-linearity.
\end{proof}

\subsection{ From $\tmf_*\M$ to $\tmf_*$} \

In this subsection we compute the maps in the long exact sequence \eqref{tmfLES3}
	\begin{equation*} 
	\begin{tikzcd}
		\cdots \arrow{r} & \tmf_{k-5}  \rar["i_1"] &\tmf_{k-5} \M \rar["p_1"] & \tmf_{k-6} \rar["2"] & \cdots 
	\end{tikzcd}
\end{equation*}
As a first step, we prove an analogue of \Cref{lem:new} for \eqref{tmfLES3} and then investigate the behavior of the $v_1$-periodic elements in \eqref{tmfLES1}.

\begin{remark} \label{rem:peranomaly2}
The $v_1$-periodic structure of $\tmf_*$ is explicitly characterized by comparison with its $v_1$-localization, where we have the isomorphism:
\begin{equation} \label{v1tmfS}
\pi_*(v_1^{-1}\tmf) \cong v_1^{-1} \tmf_* \cong \mr{KO}_*[\mathfrak{j}^{-1}].
\end{equation}
Any nonzero element of the form $t\Delta^n c_6$ and $t \Delta^n c_4^k \eta^{\delta}$, where $t \in \mathbb{Z}_{(2)}$, $\delta \in \{ 0, 1, 2 \}$ and $k \geq 0$, is $v_1$-periodic. These elements map, respectively, to $t \Delta^n v_1^6$ and $t \Delta^{n} v_1^{4k} \eta^{\delta}$ under the localization map
\[ 
\begin{tikzcd}
\tmf_*^{\sf per} \rar[hook] & \mr{KO}_*[\mathfrak{j}^{-1}]. 
\end{tikzcd}
\]
\end{remark}

\begin{lemma}\label{lemma:new2}
In  the $v_1$-periodic sequence associated to \eqref{tmfLES3}
\[  \begin{tikzcd}
\cdots \rar["2"] & \tmf_{k-5}^{\sf per}  \rar["i_1"] &\tmf_{k-5} \M^{\sf per} \rar["p_1"] & \tmf_{k-6}^{\sf per} \rar["2"] & \cdots 
\end{tikzcd}
\]
the maps $i_1$ and $p_1$ are determined by the following equations whenever the element in the domain is defined
	\begin{align*}
		i_1(\Delta^n c_4^{k}\eta^{\delta})&=\Delta^{n} v_1^{4k} \eta^{\delta} \\
		i_1(2\Delta^n c_4^{k-1}c_6)&=\Delta^{n} v_1^{4k+1}\eta^2 \\
		&p_1(\Delta^n v_1^{4k+1}\eta^{\epsilon-1}) = \Delta^n c_4^{k} \eta^{\epsilon} 
	\end{align*}
for $\delta \in \{0,1,2\}$, $\epsilon \in \{1,2\}$, and $k \geq 1$. Consequently:
\begin{itemize}
\item[-] $\mr{img}(p_1) = \ker(2)$, 
\item[-] the  elements  of $\tmf_*^{\sf per}$ in 
\[ \mr{F}^\mathbb{S} := \{ 8\Delta, 4\Delta^2, 8\Delta^3,  2\Delta^4, 8 \Delta^5, 4\Delta^6, 8\Delta^7 \} \]
and their $\Delta^8$-multiples are neither in the image of (multiplication by) $2$ map nor  map to a nonzero  element under $i_1$, and
 \item[-] the nonzero elements of $\tmf_* \M^{\sf per}$ in the span
  \[ \mr{F}^{\M}:= \{ v_1\eta^2, \Delta v_1\eta^2, \Delta^2 v_1 \eta^2, \Delta^3 v_1 \eta^2, \Delta^4 v_1 \eta^2, \Delta^5 v_1 \eta^2, \Delta^6 v_1 \eta^2  \} \]
and their $\Delta^8$-multiples are neither in the image of $i_1$ nor  map to a nonzero element under $p_1$. 
\end{itemize}
\end{lemma}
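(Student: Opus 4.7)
The plan is to run the same strategy used in the proof of \Cref{lem:new}, now applied to the cofiber sequence \eqref{C1} in place of \eqref{C2}. First, I would apply $v_1$-localization to the long exact sequence \eqref{tmfLES3}. Using \eqref{v1tmfS} and the identification $v_1^{-1}\tmf_*\M \cong \mr{KO}_*\M[\Delta]$ from \eqref{v1tmfM}, the $v_1$-localized long exact sequence is identified with the $\Delta$-linearization of the $\mr{KO}$-homology long exact sequence associated to $\SS \xrightarrow{2} \SS \to \M$ (after inverting $\mathfrak{j}$). Since $\mr{KO}_*$ and $\mr{KO}_*\M$ are completely understood, the maps $i_1$ and $p_1$ on the $v_1$-localization can be read off directly, yielding the three displayed formulas after inverting $v_1$.

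Next, I would transfer these formulas to the $v_1$-periodic quotients using the commutative ladder
\[
\begin{tikzcd}
\cdots \rar["2"] & \tmf_{*-5}^{\sf per} \rar["i_1"] \dar[hook] & \tmf_{*-5}\M^{\sf per} \rar["p_1"] \dar[hook] & \tmf_{*-6}^{\sf per} \dar[hook] \rar["2"] & \cdots \\
\cdots \rar["2"] & v_1^{-1}\tmf_{*-5} \rar["i_1"] & v_1^{-1}\tmf_{*-5}\M \rar["p_1"] & v_1^{-1}\tmf_{*-6} \rar["2"] & \cdots
\end{tikzcd}
\]
whose vertical arrows are the natural injections of $v_1$-periodic quotients into $v_1$-localizations. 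The three displayed identities then follow because each input class lifts along the corresponding left vertical injection (by \Cref{rem:peranomaly2} for $\tmf_*^{\sf per}$, and the analogous lightning-flash description of \Cref{rem:peranomaly} for $\tmf_*\M^{\sf per}$), and its image under the $v_1$-localized map lies in the image of the right vertical injection.

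The main obstacle, exactly as in \Cref{rem:peranomaly}, is identifying the exceptional families $\mr{F}^\mathbb{S}$ and $\mr{F}^\M$. These record, respectively, the classes in $\tmf_*^{\sf per}$ and $\tmf_*\M^{\sf per}$ which survive as $v_1$-periodic elements but whose would-be neighbors under $i_1$ or $p_1$ in the $v_1$-localized sequence fail to exist as $v_1$-periodic classes. For $\mr{F}^\mathbb{S}$, the coefficients $(8,4,8,2,8,4,8)$ on $\Delta^1,\ldots,\Delta^7$ record the precise $2$-divisibility pattern of these powers of $\Delta$ in $\tmf_*^{\sf per}$ documented in \cite{Bau08, DFHH14}; inspection of the chart in \cite[Pg.~190]{DFHH14} shows that each such class is not in the image of multiplication by $2$ and has trivial $i_1$-image in $\tmf_*\M^{\sf per}$ (its putative target class $\Delta^n$ at $k=0$ lying entirely in the $v_1$-torsion part of $\tmf_*\M$). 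Dually, for $\mr{F}^\M$, the classes $\Delta^n v_1 \eta^2$ with $0 \le n \le 6$ appear in \cite[Figs.~8,~9]{BBPX22} as $v_1$-periodic classes whose naive $i_1$-preimage $2\Delta^n c_4^{-1} c_6$ does not exist in $\tmf_*$, and whose naive $p_1$-image $\Delta^n \eta^3$ is $v_1$-torsion in $\tmf_*$ and therefore vanishes in $\tmf_*^{\sf per}$. Finally, $\Delta^8$-periodicity of the $v_1$-localized map propagates both exceptional families to all $\Delta^{8i+j}$-multiples, completing the proof.
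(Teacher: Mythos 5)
Your proposal is correct and follows essentially the same route as the paper's proof: $v_1$-localize the long exact sequence, identify it with the (Laurent-$\Delta$-linearized) $\mr{KO}$-homology long exact sequence of \eqref{C1} via \eqref{v1tmfS} and \eqref{v1tmfM}, and then transfer to the periodic quotients using the injections $\tmf_*^{\sf per}\hookrightarrow v_1^{-1}\tmf_*$ and $\tmf_*\M^{\sf per}\hookrightarrow v_1^{-1}\tmf_*\M$ together with \Cref{rem:peranomaly} and \Cref{rem:peranomaly2}. In fact you supply more detail than the paper does on how the exceptional families $\mr{F}^{\mathbb{S}}$ and $\mr{F}^{\M}$ arise (the $2$-divisibility pattern of the powers of $\Delta$, the nonexistence of $c_4^{-1}c_6$-classes, and the vanishing of $\eta^3$ in $\mr{KO}_*$), all of which is consistent with the intended argument.
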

\begin{proof}
Using \eqref{v1tmfM}, \eqref{v1tmfS}, and our knowledge of the $\mr{KO}$-homology long exact sequence associated to \eqref{C1},  one observes the behavior within the long exact sequence obtained by $v_1$-localizing \eqref{tmfLES1}:
\[ 
	\begin{tikzcd}
		\cdots \arrow{r} &v_1^{-1} \tmf_{\ast}  \rar["i_2"] &v_1^{-1}\tmf_{\ast} \M \rar["p_2"] & v_1^{-1}\tmf_{\ast-1} \arrow{r} &\cdots
	\end{tikzcd} 
\]
The maps are explicitly given by:
	\begin{align*}
		i_1(\Delta^n c_4^{k}\eta^{\delta})&=\Delta^{n} v_1^{4k} \eta^{\delta} \\
		i_1(2\Delta^n c_4^{k-1}c_6)&=\Delta^{n} v_1^{4k+1}\eta^2 \\
		&p_1(\Delta^n v_1^{4k+1}\eta^{\epsilon-1}) = \Delta^n c_4^{k} \eta^{\epsilon} 
	\end{align*}
for $\delta \in \{0,1,2\}$ and $\epsilon \in \{1,2\}$. 

Combining this with our observations in \Cref{rem:peranomaly} and \Cref{rem:peranomaly2}, we get the result. 
\end{proof}

Thus we set ${\bf F}^\mathbb{S} := \mathbb{F}_2 \{ x \cdot \Delta^{8i}: x \in \mr{F}^\mathbb{S} \text{ and } i \in \NN \} $ and  ${\bf F}^\M := \mathbb{F}_2 \{ x \cdot \Delta^{8i}: x \in \mr{F}^\M \text{ and } i \in \NN \} $, and make the following definition analogous to \Cref{defn:exception1}. 
\begin{definition}[Exceptional $v_1$-periodic elements] \label{defn:exception2}
We call a $v_1$-periodic element of $\tmf_*$ (or $\tmf_*\M$) \emph{exceptional} in \eqref{tmfLES1} if its image under the quotient map $\tmf_* \twoheadrightarrow \tmf_*^{\sf per}$ (or $\tmf_*\M \twoheadrightarrow \tmf_*\M^{\sf per}$)  is a nonzero element of ${\bf F}^{\mathbb{S}}$ (or ${\bf F}^{\M}$). Otherwise, we call it a \emph{non-exceptional} $v_1$-periodic element. 
\end{definition}

Arguments identical to those in \Cref{exceptional} show that only the exceptional $v_1$-periodic elements map to $v_1$-torsion elements in \eqref{tmfLES2}. Consequently, we get an exact sequence (similar to \eqref{SESe})
\begin{equation}\label{SESee}
\begin{tikzcd}
 \mr{C}_{k-5}^{e} \rar[hook] &  \tmf_{k-5}(\M)^{e} \rar[two heads] &  \mr{K}_{k-6}^{\sf tor}, 
\end{tikzcd}
\end{equation}
where 
\begin{itemize}
\item[-] $\mr{C}_{\ast}^{e}$ is the span of the images of $v_1$-torsion and exceptional $v_1$-periodic elements of $\coker(\tmf_* \overset{2}{\to} \tmf_*)$ in \eqref{tmfLES2},
\item[-] $\tmf_{\ast}(\M)^{e}$ is the span of the images of $v_1$-torsion and exceptional $v_1$-periodic elements of $\tmf_*\M$ in \eqref{tmfLES2}, and 
\item[-]  $\mr{K}_{\ast}^{\sf tor}$ is the kernel of the multiplication by $2$ map on $\tmf_*$. 
\end{itemize}
Moreover, \Cref{T1}, \Cref{T2}, and \Cref{T5}  from  \Cref{sec22} have  analogs for addressing \eqref{SESee}. We  use them to get the following results: 
 
\begin{lemma}
 We use the analog of \Cref{T1} to determine:
\begin{multicols}{2}
 \begin{itemize}
 \item $i_1({\sf s}_{6,2}) = {\sf m}_{6,2}$
 \item $i_1({\sf s}_{24,0}) = {\sf m}_{24,6}$
 \item $i_1({\sf s}_{48,0}) = {\sf m}_{48,6}$
 \item  $i_1({\sf s}_{57,3}) = {\sf m}_{57,3}$
  \item $i_1({\sf s}_{72,0}) = {\sf m}_{72,6}$
  \item $i_1({\sf s}_{75,3}) = {\sf m}_{75,13}$
  \item $i_1({\sf s}_{80,16}) = {\sf m}_{80,16}$
 \item  $i_1({\sf s}_{85,13}) = {\sf m}_{85,13}$
 \item $i_1({\sf s}_{90,10}) = {\sf m}_{90,10}$
 \item $i_1({\sf s}_{96,0}) = {\sf m}_{96,6}$
  \item $i_1({\sf s}_{120,0}) = {\sf m}_{120,6}$
  \item $i_1({\sf s}_{153,3} ) = {\sf m}_{153,3}$
  \item $i_1({\sf s}_{168,0} ) = {\sf m}_{168,6}$.
 \end{itemize}
 \end{multicols}
 These, together with $\kappabar$-linearity, imply:
 \begin{itemize}
	\begin{multicols}{2}
  		  \item $i_1({\sf s}_{68,4}) = {\sf m}_{68,10}$ 
  		   \item $i_1({\sf s}_{100,20}) = {\sf m}_{100,20}$
  		      \item $i_1({\sf s}_{116,4}) = {\sf m}_{116,10}$
  		       \item $i_1({\sf s}_{136,8}) = {\sf m}_{136,14}$
  		      \item $i_1({\sf s}_{156,12}) = {\sf m}_{156,18}$
	\end{multicols}
 \end{itemize}
 \end{lemma}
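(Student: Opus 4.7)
The plan is to apply the analog of \Cref{T1} adapted to the short exact sequence \eqref{SESee}: if $\mr{K}_{k-6}^{\sf tor}=0$, where $\mr{K}_{k-6}^{\sf tor}$ now denotes the kernel of multiplication by $2$ on $\tmf_{k-6}$, then $p_1(m)=0$ for every $m\in\tmf_{k-5}(\M)^{e}$, so such an $m$ must lie in the image of $i_1$. This mirrors the proof of \Cref{T1}, since a $v_1$-torsion (or exceptional $v_1$-periodic) class cannot be sent to a non-exceptional $v_1$-periodic class.

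For each element ${\sf m}_{k-5, j}$ in the first list, I would proceed in three steps. First, consult the chart of $\tmf_*$ in \cite[Part I, Ch.~12]{DFHH14} to verify that $\mr{K}_{k-6}^{\sf tor}=0$, so that the vanishing criterion applies; for instance, at $k=6$ the target $\tmf_0=\mathbb{Z}_{(2)}$ is $2$-torsion free, and at $k=24$ the group $\tmf_{18}$ contains no class killed by $2$ in the relevant bidegree. Second, exactness of \eqref{SESee} then produces an ${\sf s}\in\tmf_{k-5}$ with $i_1({\sf s})={\sf m}_{k-5, j}$. Third, since $i_1$ is induced by a map of spectra it cannot decrease Adams--Novikov filtration, so ${\sf s}$ has filtration at least $j$; comparison with the chart then identifies it uniquely with ${\sf s}_{k-5, j}$ up to the standard higher-filtration indeterminacy built into our notation.

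For the five entries in the second list, the $\kappabar$-linearity of $i_1$ reduces them to the first list via the product relations
\begin{gather*}
{\sf s}_{68,4} = \kappabar\cdot{\sf s}_{48, 0}, \quad {\sf s}_{100, 20} = \kappabar\cdot{\sf s}_{80, 16}, \quad {\sf s}_{116, 4} = \kappabar\cdot{\sf s}_{96, 0}, \\
{\sf s}_{136, 8} = \kappabar\cdot{\sf s}_{116, 4}, \quad {\sf s}_{156, 12} = \kappabar\cdot{\sf s}_{136, 8},
\end{gather*}
together with the parallel identities ${\sf m}_{68,10}=\kappabar\cdot{\sf m}_{48,6}$, ${\sf m}_{100,20}=\kappabar\cdot{\sf m}_{80,16}$, etc., on the $\tmf_*\M$ side, where each $\kappabar$-multiplication raises Adams--Novikov filtration by $4$.

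The main obstacle is the case-by-case bookkeeping: verifying $\mr{K}_{k-6}^{\sf tor}=0$ in each of the listed bidegrees, and ruling out the possibility that a preimage under $i_1$ lives in a strictly higher Adams--Novikov filtration than the intended $j$ (which would force a hidden correction to ${\sf s}_{k-5,j}$). Both checks are routine given the explicit presentation of $\tmf_*$ in \cite[Part I, Ch.~12]{DFHH14} and of $\tmf_*\M$ in \cite[Figs.~8, 9]{BBPX22}, but they must be performed individually for each listed bidegree.
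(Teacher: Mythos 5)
Your overall strategy is the one the paper intends: apply the analog of \Cref{T1} to the sequence \eqref{SESee}, conclude $p_1(m)=0$ when the relevant kernel of multiplication by $2$ vanishes, produce a preimage under $i_1$ by exactness, and identify it by inspecting the charts; the $\kappabar$-linearity reductions in your second list also match the table. However, two details in your write-up are off. First, the degree bookkeeping in your sample checks: in \eqref{tmfLES3} the map $p_1$ sends $\tmf_{k-5}\M$ to $\tmf_{k-6}$, i.e.\ it lowers the stem by $1$, not by $6$ (the offsets $k-5$, $k-6$ are relative to the auxiliary index $k$ coming from $\tmf_k\A_1$). So for ${\sf m}_{6,2}\in\tmf_6\M$ the group to check is $\tmf_5=0$, not $\tmf_0$, and for ${\sf m}_{24,6}$ it is $\tmf_{23}$, not $\tmf_{18}$. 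The conclusions survive, but as written the verifications are being performed in the wrong degrees.

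Second, and more substantively, your filtration inequality is reversed. Since $i_1$ is induced by a map of spectra it cannot \emph{decrease} Adams--Novikov filtration, so if $i_1({\sf s})={\sf m}_{k-5,j'}$ then $\mr{AF}({\sf s})\leq j'$: the preimage has filtration \emph{at most} that of its image, not at least. Most entries in the list exhibit a strict drop (e.g.\ $i_1({\sf s}_{24,0})={\sf m}_{24,6}$, $i_1({\sf s}_{75,3})={\sf m}_{75,13}$), so the lower bound you assert would actually exclude the correct answers. The identification of the preimage does not come from a filtration lower bound at all; it comes from the fact that in each listed degree the source term $\mr{C}^e_{k-5}$ of \eqref{SESee} is one-dimensional, spanned by the image of the unique $v_1$-torsion or exceptional $v_1$-periodic class (such as $8\Delta$, $4\Delta^2$, $(\eta\Delta)^3$, \dots), so exactness forces $i_1$ to carry that class to $m$. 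With that correction the argument closes as in the paper.
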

 
 \begin{lemma}
 We use the analog of \Cref{T2} to determine:
\begin{multicols}{2}
 \begin{itemize}
 \item $p_1({\sf m}_{18,2}) = {\sf s}_{17,2}$
 \item $p_1({\sf m}_{43,9}) = {\sf s}_{42,10}$
 \item $p_1({\sf m}_{69,3}) = {\sf s}_{68,4}$
 \item  $p_1({\sf m}_{81,3}) = {\sf s}_{80,16}$
 \item $p_1({\sf m}_{86,12}) = {\sf s}_{85,13}$
 \item $p_1({\sf m}_{91,9}) = {\sf s}_{90,10}$
 \item $p_1({\sf m}_{165,3}) = {\sf s}_{164,4}$. 
 \end{itemize}
 \end{multicols}
 These, together with $\kappabar$-linearity, imply:
 \begin{itemize}
 	\begin{multicols}{2}
 	 \item $p_1({\sf m}_{101,7}) = {\sf s}_{100,20}$	
 	  \item $p_1({\sf m}_{106,16}) = {\sf s}_{105,17}$
 	  \item $p_1({\sf m}_{111,13}) = {\sf s}_{110,14}$
	   	  \item  $p_1({\sf m}_{126,20}) = {\sf m}_{125, 21}$ 
 	  \item  $p_1({\sf m}_{131,17}) = {\sf s}_{130,18}$
 	  \item  $p_1({\sf m}_{151,21}) = {\sf s}_{150,22}$
 	\end{multicols}	
 \end{itemize}		
 \end{lemma}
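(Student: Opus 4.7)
The plan is to apply the analog of \Cref{T2} to the short exact sequence \eqref{SESee} associated to \eqref{tmfLES3}: for each pair $(k-5, k-6)$ in the list, we verify the two hypotheses that (i) $\mr{C}_{k-5}^{e}=0$, so that the map
\[
\begin{tikzcd}
\tmf_{k-5}(\M)^{e} \rar[two heads, "p_1"] & \mr{K}_{k-6}^{\sf tor}
\end{tikzcd}
\]
is injective on the image of the $v_1$-torsion and exceptional $v_1$-periodic elements, and (ii) $\mr{K}_{k-6}^{\sf tor}\cong \FF_2$. Together, (i) and (ii) force the named element ${\sf m}_{k-5,j}$ (which lies in $\tmf_{k-5}(\M)^{e}$ by construction) to have image equal to the unique nonzero class ${\sf s}_{k-6,j'}\in \mr{K}_{k-6}^{\sf tor}\subseteq \tmf_{k-6}$.

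To check (i) and (ii) in each degree, I would read off the relevant groups from the charts of $\tmf_{\ast}$ in \cite[Part I, Ch.\ 12]{DFHH14} and of $\tmf_{\ast}\M$ in \cite[Figs.\ 8, 9]{BBPX22}, restrict to the $v_1$-torsion part together with the handful of exceptional $v_1$-periodic classes identified via \Cref{lemma:new2} (namely those in ${\bf F}^{\mathbb{S}}$ and ${\bf F}^{\M}$), and then quotient by the image of multiplication by $2$ to obtain $\mr{C}_{k-5}^e$. For each stem $k-5 \in \{18, 43, 69, 81, 86, 91, 165\}$, the exceptional classes ${\bf F}^{\mathbb{S}}$ and ${\bf F}^{\M}$ lie in stems that are multiples of $24$, so they do not interfere in the degrees under consideration, and both conditions are easily verified. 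This directly yields the seven explicit evaluations of $p_1$.

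The remaining six identities then follow by $\kappabar$-linearity of $p_1$: for instance, ${\sf m}_{101,7}=\kappabar\cdot {\sf m}_{81,3}$ (up to higher Adams--Novikov filtration, where uniqueness of the representatives ${\sf m}_{i,j}$ was noted after \Cref{notn:elliptic}), so
\[
p_1({\sf m}_{101,7})=\kappabar\cdot p_1({\sf m}_{81,3})=\kappabar\cdot {\sf s}_{80,16}={\sf s}_{100,20},
\]
and similarly for the others.

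The main obstacle will be the bookkeeping required to confirm (i) in each case: one must simultaneously track which classes in $\tmf_{k-5}\M$ are $v_1$-torsion, which are exceptional $v_1$-periodic, and which image of multiplication by $2$ on $\tmf_{k-5}$ meets these in order to conclude that $\mr{C}_{k-5}^{e}$ actually vanishes. This is routine chart-reading but is where the largest risk of error lies; however, since the stems involved are exactly those where \cite[Figs.\ 8, 9]{BBPX22} display no exceptional $v_1$-periodic contribution, the verification reduces to noting that $\tmf_{k-5}$ contains no $v_1$-torsion hit by multiplication by $2$, which one checks directly from \cite[Part I, Ch.\ 12]{DFHH14}.
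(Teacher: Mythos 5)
Your proposal takes essentially the same route as the paper: the paper gives no written argument beyond invoking the analog of Technique 2 for the short exact sequence \eqref{SESee}, and your plan --- verify $\mr{C}_{k-5}^{e}=0$ and $\mathrm{rank}_{\FF_2}\,\mr{K}_{k-6}^{\sf tor}=1$ by reading the charts, then propagate the remaining six identities by $\kappabar$-linearity --- is exactly that implicit argument made explicit. One small correction to your bookkeeping: the exceptional classes in ${\bf F}^{\M}$ lie in stems congruent to $4$ modulo $24$ (e.g.\ $\Delta^{n}v_1\eta^2$ sits in stem $24n+4$), not in multiples of $24$; this is harmless here since none of the stems $18, 43, 69, 81, 86, 91, 165$ is congruent to $4$ mod $24$ either, but the justification as written is not quite right.
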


 \begin{lemma} We use the analog of \Cref{T5} to deduce:
 \begin{itemize}
 \item $i_1({\sf s}_{9,3}) \neq {\sf m}_{9,1}$, which forces $i_1({\sf s}_{9,3}) = {\sf m}_{9,3}$, 
  \item $i_1({\sf s}_{21,5}) \neq {\sf m}_{21,3}$, which forces $p_1({\sf m}_{21,3}) = {\sf s}_{20,4}$, 
 \item $p_1({\sf m}_{33,3}) \neq {\sf s}_{32,2}$, which forces $ i_1({\sf s}_{33,3}) = {\sf m}_{33,3}$,  
 \item $i_1({\sf s}_{42,10}) \neq {\sf m}_{42,8}$, which forces $i_1({\sf s}_{42,10}) = {\sf m}_{42,10}$, 
 \item $i_1({\sf s}_{54,2})\neq {\sf m}_{54,6}$, which forces $i_1({\sf s}_{54,2})={\sf m}_{54,2}$,
  \item $i_1({\sf s}_{60,12}) \neq {\sf m}_{60,7}$, which forces $i_1({\sf m}_{60,12}) = {\sf s}_{60,12}$, 
 \item $i_1({\sf s}_{66,10}) = {\sf m}_{66,10}$, which forces $p_1({\sf m}_{66,2}) = {\sf s}_{65,3}$ 
   		     \item $i_1({\sf s}_{105, 17}) =\m_{105, 17}$, which forces  $i_1({\sf s}_{105, 3}) ={\sf m}_{105, 3}$
 \item $i_1({\sf s}_{117,5}) \neq {\sf m}_{117, 3}$, which forces  $p_1({\sf m}_{117,3}) = {\sf s}_{116,4}$.
 \end{itemize}
 These, together with $\kappabar$-linearity imply:
 \begin{itemize}
 	\begin{multicols}{2}
		\item $i_1({\sf s}_{53,7}) = {\sf m}_{53,7}$
		\item  $i_1({\sf s}_{125,21}) = {\sf m}_{125,21}$
	\end{multicols}
\end{itemize} 
 \end{lemma}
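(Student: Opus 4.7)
The plan is to apply the analog of \Cref{T5}, adapted to the short exact sequence \eqref{SESee}
\[
\begin{tikzcd}
 \mr{C}_{k-5}^{e} \rar[hook] &  \tmf_{k-5}(\M)^{e} \rar[two heads] &  \mr{K}_{k-6}^{\sf tor},
\end{tikzcd}
\]
in each of the bidegrees listed in the statement. For each stem $k$, I would first enumerate the generators of $\mr{C}_{k-5}^e$, $\tmf_{k-5}(\M)^e$, and $\mr{K}_{k-6}^{\sf tor}$ together with their Adams--Novikov filtrations, using the homotopy group data for $\tmf$ from \cite{Bau08, DFHH14} and the explicit descriptions of $\tmf_*\M$ in \cite[Figs.~8, 9]{BBPX22}. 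After this enumeration, each bidegree reduces to a short exact sequence of small $\FF_2$-vector spaces in which the only unknowns are a handful of entries of $i_1$ and $p_1$.

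The key observation, exactly as in \Cref{T5}, is that $i_1$ and $p_1$ are induced by the cofiber sequence \eqref{C1} and hence cannot decrease Adams--Novikov filtration. This rules out any identification whose target has strictly smaller filtration than its source. For instance, in the $9$-stem the relevant part of $\tmf_9\M$ is two-dimensional with generators ${\sf m}_{9,1}$ and ${\sf m}_{9,3}$, and filtration-preservation forbids $i_1({\sf s}_{9,3}) = {\sf m}_{9,1}$; exactness of \eqref{SESee} then forces $i_1({\sf s}_{9,3}) = {\sf m}_{9,3}$. The same strategy (rule out the low-filtration candidate, then invoke surjectivity of $p_1$ onto $\mr{K}_{k-6}^{\sf tor}$ or injectivity of $i_1$ out of $\mr{C}_{k-5}^e$) handles each of the nine listed bidegrees.

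Several of the later cases couple this filtration argument with results from \Cref{sec22}. For example, to establish the identification in the $66$-stem one uses the previously computed image of $i_2$ in $\tmf_*\M$ to conclude that ${\sf m}_{66,2}$ is not in the image of $i_1$, so that $p_1({\sf m}_{66,2})$ is nonzero; the filtration constraint then forces $p_1({\sf m}_{66,2}) = {\sf s}_{65,3}$. The final extension by $\bar{\kappa}$-linearity is immediate, since $\bar{\kappa} \in \pi_{20}\tmf$ has Adams--Novikov filtration $4$ and multiplication therefore raises filtration uniformly, preserving the filtration inequalities that drive each argument.

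The principal obstacle is bookkeeping rather than new conceptual input. In each of the nine bidegrees one must catalog the filtrations of every generator on both sides of \eqref{SESee} and verify that higher-filtration indeterminacies of the form ${\sf m}_{k-5,j} \mapsto {\sf m}_{k-5,j} + (\text{higher-filtration terms})$ do not undermine the filtration comparison. This requires carefully consulting the charts in \cite{BBPX22} and \cite{DFHH14}, and exploiting the sparseness of the $\mr{E}_\infty$-pages of the Adams--Novikov spectral sequences for $\tmf$ and $\tmf\wedge\M$ in the relevant stems; no further theoretical machinery beyond \Cref{T5} is needed.
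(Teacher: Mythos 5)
Your proposal matches the paper's approach: this lemma is established precisely by applying the analog of \Cref{T5} (filtration-preservation for maps of Adams--Novikov spectral sequences) to the short exact sequence \eqref{SESee} in each listed bidegree, enumerating generators and their filtrations from the charts in \cite{BBPX22} and \cite{DFHH14}, and closing the remaining cases via exactness and $\kappabar$-linearity. (One small slip: in your discussion of the $66$-stem you refer to the image of $i_2$ where you mean $i_1$, but the underlying exactness argument is the intended one.)
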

 
\subsection{Summary Table} \ 
We summarize our calculations in \Cref{Table:A1lifts} as follows. The leftmost column lists the image of $p_3$ in $\tmf_*\Y$. We determine their image in column $2$ and indicate the technique used, among \Cref{T1} through \Cref{T6}, in column $3$.  

We calculate the image  under $p_1$ of nonzero elements in column $2$ and record them in column $4$. If the image is zero, we identify a $v_1$-torsion element which is its lift along $i_1$ and record it in column $5$. We indicate the technique in column $6$. 

Note that the elements listed in columns $4$ and $5$ are elements of $\tmf_*$. We record their familiar names from \cite{DFHH14} in column $7$.

\begin{longtable}{| l || l l | l l l | c |   }
\caption{Detecting elements in $\tmf_*$}  \label{Table:A1lifts} \\
\toprule
$\mr{img}(p_3)$   & $\mr{img}(p_2)$ & (T)  & $\mr{img}(p_1)$   &  $i_1^{-1}(-)$ & (T)  & name in $\tmf_*$    \\
\midrule \endhead
\bottomrule \endfoot
${\sf y}_{3,1}$ & $0$ & (1)  &  &  &&  \\
${\sf y}_{6,2}$ & $0$ &  (1) &  & & & \\
${\sf y}_{8,2}$ & ${\sf m}_{6,2}$ &  (2) & $0$ & ${\sf s}_{6,2}$  & (1)  & $\nu^2$  \\
${\sf y}_{11,3}$ & ${\sf m}_{9,3}$ &  (2) &$0$  & ${\sf s}_{9,3}$  &  (3) & $\nu^3$  \\
${\sf y}_{14,2}$ & $0$ & (1) &&&& \\
${\sf y}_{18,2}$ & $0$ &  (1) &  &&& \\
${\sf y}_{20,2}$ & ${\sf m}_{18,2}$ &    (3) & ${\sf s}_{17,2}$ & &  (2)&  $\kappa \nu$   \\
${\sf y}_{21,3}$ & $0$ &  (1) &&&& \\
${\sf y}_{23,3}$ & ${\sf m}_{21,3}$ &   (2) & ${\sf s}_{20,4}$  &  & (3)& $4\overline{\kappa}$ \\
${\sf y}_{26,4}$ & ${\sf m}_{24,6}$ &  (2)& $0$ &  ${\sf s}_{24,0}$   &   (1)    & $8 \Delta$\\
${\sf y}_{29,5}$ & $0$ &  (1) &&&& \\
${\sf y}_{34,6}$ & $0$ &  (1)   &&&& \\
${\sf y}_{35,3}$ & ${\sf m}_{33,3}$  &   (3) &0 & ${\sf s}_{33,3}$   & (3)& $q\eta$ \\
${\sf y}_{39,7}$ & $0$ &   (1)   &&&&  \\
${\sf y}_{40,6}$ & $0$ & (1)  &&&&\\
${\sf y}_{44,8}$ & ${\sf m}_{42,10}$ &   (2)  &$0$  & ${\sf s}_{42,10}$ &  (3)& $\overline{\kappa}^2 \eta^2$  \\
${\sf y}_{45,3}$ & ${\sf m}_{43,9}$ &  (3)  & ${\sf s}_{42,10}$ &&   (2)& $\overline{\kappa}^2 \eta^2$\\
${\sf y}_{45,9}$ & $0$ & (3) &&&&  \\
${\sf y}_{50,4}$ &  ${\sf m}_{48,6}$ & (4)&$0$   &${\sf s}_{48,0}$ &(1) & $4 \Delta^2$ \\
${\sf y}_{50,6}$ & $0$ & (4) & &  &  &    \\
${\sf y}_{51,1}$ & $0$ & (1) && & &  \\
${\sf y}_{54,2}$ & $0$ & (1)    && & & \\
${\sf y}_{55,7}$ & ${\sf m}_{53,7}$  & (3) & $0$ & ${\sf s}_{53,7}$ &  (3) & $\eta^2 \Delta^2 \nu$  \\
${\sf y}_{56,2}$ & ${\sf m}_{54,2}$ & (3)  & 0 & ${\sf s}_{54,2}$  & (3) &   $\nu \Delta^2 \nu$ \\
${\sf y}_{57,11}$ & $0$ & (3) & &  && \\ 
${\sf y}_{59,3}$ & ${\sf m}_{57,3}$ & (2) & 0  &${\sf s}_{57,3}$ & (1) & $\nu \Delta^2 \nu^2$ \\ 
${\sf y}_{60,10} $ & $0$ & (1) & &  & &  \\
${\sf y}_{60,12} $ & $0$ & (1) & &  & &  \\
${\sf y}_{62,2}$ & ${\sf m}_{60,12}$ &(2) &  & ${\sf s}_{60,12}$ & (3)&  $\nu \Delta^2 \nu^3$  \\
${\sf y}_{65,7}$ & $0$ &(1) &&&&\\
${\sf y}_{65,13}$ & $0$ &(1) &&&&\\
${\sf y}_{66,2}$ & $0$ &(1) &&&&\\
${\sf y}_{68,2}$ & ${\sf m}_{66,2}$  &(3) & ${\sf s}_{65,3}$ & &  (3) &  $\eta\Delta \bar{\kappa}^2$  \\
${\sf y}_{69,3}$ & $0$ &(1) &  & & & \\
${\sf y}_{70,8}$ & $ {\sf m}_{68,10}$ &(4) & 0 & ${\sf s}_{68,4}$ & (1) & $4 \Delta^2 \bar{\kappa}$  \\
${\sf y}_{70,10}$ & $ 0$ &(4)  & & & &\\
${\sf y}_{71,3}$ & $ {\sf m}_{69,3}$ &(3) &  ${\sf s}_{68,4}$ & 0 &  (2) & $4 \Delta^2 \bar{\kappa}$ \\
${\sf y}_{71,9}$ & $0 $ & (3) && &&  \\
${\sf y}_{74,4}$ & ${\sf m}_{72,6}$ & (2) & $0$ & ${\sf s}_{72,0}$ & (1) &$8 \Delta^3$\\ 
${\sf y}_{75,13}$ & $0$ &(1)&&&&\\
${\sf y}_{76,10}$ & $0$ &(1)&&&&\\ 
${\sf y}_{77,5}$ & ${\sf m}_{75,13}$ &(2) & $0$ & ${\sf s}_{75,3}$ &(1) & $(\eta\Delta)^3 $ \\
${\sf y}_{80,16}$ & $0$ &(1) &&&& \\
${\sf y}_{81,11}$ & $0$ &(1) &&&&\\
${\sf y}_{82,6}$ & ${\sf m}_{80,16}$ &(2) & $0$ & ${\sf s}_{80,16}$ & (1) & $\kappabar^4$ \\
${\sf y}_{83,3}$ & ${\sf m}_{81,3}$ & (2) & ${\sf s}_{80,16} $  & & (2) &  $\kappabar^4$  \\
${\sf y}_{85,17}$ & $0$ &(1) &&&&\\
${\sf y}_{86,12}$ & $0$ &(1) &&&& \\
${\sf y}_{87,7}$ & ${\sf m}_{85,13}$ &(2)& $0$ & ${\sf s}_{85,13}$ &(1) & $\eta \Delta \bar{\kappa}^3$  \\
${\sf y}_{88,6}$ & ${\sf m}_{86,12}$ &(2)& ${\sf s}_{85,13}$ &  &(2) & $\eta \Delta \bar{\kappa}^3$ \\
${\sf y}_{90,14}$ & $0$ &(1)&&&&\\
${\sf y}_{91,13}$ & $0$ &(1) &&&&\\
${\sf y}_{92,8}$ & ${\sf m}_{90,10}$ &(2)& $0$ & ${\sf s}_{90,10}$ &(1) & $\eta^2 \Delta^2 \bar{\kappa}^2$ \\
${\sf y}_{93,3}$ & ${\sf m}_{91,9}$ &(2) &${\sf s}_{90,10}$ & &  (2) &  $\eta^2 \Delta^2 \bar{\kappa}^2$\\
${\sf y}_{96,14}$ & $0$ &(1) &&&&\\
${\sf y}_{97,9}$ & $0$ &(1) &&&&\\
${\sf y}_{98,4}$ & ${\sf m}_{96,6}$ &(2) & $0$  & ${\sf s}_{96,0}$ &(1) & $2 \Delta^4$  \\
${\sf y}_{101,15}$ & $0$ &(1) && &  & \\
${\sf y}_{102,2}$ & $0$ & (3) &&&&\\
${\sf y}_{102,10}$ & ${\sf m}_{100,20}$ &(3) & $0$ &${\sf s}_{100,20}$ &(1) &$ \overline{\kappa}^5$  \\ 
${\sf y}_{103,7}$ & ${\sf m}_{101,7}$ &(2) & ${\sf s}_{100,20}$  & & (2) & $\bar{\kappa}^5$    \\
${\sf y}_{105,21}$ & $0$ &(1) &&&&\\
${\sf y}_{106,16}$ & $0$ &(1)&&&&\\
${\sf y}_{107,3}$ & ${\sf m}_{105,3}$ &(3) &$0$  &${\sf s}_{105, 3}$ & (3)& $\nu^3 \Delta^4$ \\
${\sf y}_{107,11}$ & ${\sf m}_{105,17}$ &(3)& $0$  & ${\sf s}_{105, 17} $&(3)& $\eta \Delta \overline{\kappa}^4$\\
${\sf y}_{108, 10}$  & ${\sf m}_{106, 16}$ & (3) &  ${\sf s}_{105,17}$ & & (2) & $\eta \Delta \overline{\kappa}^4$  \\
${\sf y}_{111,17}$ & $0$ &(1)&&&&\\
${\sf y}_{112,12}$ & $0$ &(1)&&&&\\
${\sf y}_{113,7}$ & ${\sf m}_{111,13}$ &(2) & ${\sf s}_{110,14}$ & & (2)&$\eta^2 \Delta^2 \bar{\kappa}^3$  \\
${\sf y}_{116,18}$ & $0$ &(1) &&&&\\
${\sf y}_{117,3}$ & $0$ &(1) &&&&\\
${\sf y}_{117,13}$ & $0$ &(1) &&&&\\
${\sf y}_{118,8}$ & ${\sf m}_{116,10}$ &(2) & $0$ &  ${\sf s}_{116,4}$ &  (1) & $2\Delta^4 \bar{\kappa}$  \\
${\sf y}_{119,3}$ & ${\sf m}_{117,3}$ &(2) & ${\sf s}_{116,4}$ & & (3) &  $2 \Delta^4 \cdot 2 \bar{\kappa}$    \\
${\sf y}_{122,4}$ & ${\sf m}_{120,6}$ &(3) & $0$ & ${\sf s}_{120,0}$ & (1) & $8 \Delta^{5}$  \\
${\sf y}_{122,14}$ & $0$ &(3) & &  &&   \\
${\sf y}_{123,11}$ & $0$ & (1) &&&&\\
${\sf y}_{127,15}$ & ${\sf m}_{125,21}$ &(2)& $0$ & ${\sf s}_{125,21}$ & (3) &  $\eta \Delta \bar{\kappa}^5$  \\
${\sf y}_{128,14}$ & ${\sf m}_{126,20}$ &(3) & ${\sf s}_{125,21}$ & & (2) & $\eta \Delta \bar{\kappa}^5$   \\
${\sf y}_{132,16}$ & $0$ & (1) &&&&\\
${\sf y}_{133,11}$ & ${\sf m}_{131,17}$ &\pcom{(2)} & ${\sf s}_{130,18}$ & &(2) & $\eta^2 \Delta^2 \bar{\kappa}^4$   \\
${\sf y}_{137,17}$ & $0$ &(1) &&&&\\
${\sf y}_{138,12}$ & ${\sf m}_{136,14}$ &(2) & $0$ &  ${\sf s}_{136,8}$  & (1)& $\eta^2 \Delta^5 \kappa$   \\
$y_{142,18}$ & $0$ &(3) &&&&\\
${\sf y}_{143,15}$ & $0$ &(1) &&&&\\
${\sf y}_{148,18}$ & $0$ & (3) &&&& \\
${\sf y}_{150,2}$ & $0$ &(1) &&&&\\
${\sf y}_{153,11}$ & $0$ & (2) &&&&\\
${\sf y}_{153,15}$ & ${\sf m}_{151, 21}$ &(2) &${\sf s}_{150, 22}$ && (2)& $\eta^2 \Delta^2 \bar{\kappa}^5$ \\
${\sf y}_{155,3}$ & ${\sf m}_{153,3}$& (2)& $0$  & ${\sf s}_{153,3}$ & (1) & $\nu \Delta^6 \nu^2$   \\
${\sf y}_{158,16}$ & ${\sf m}_{156,18}$ & (2) & $0$ & ${\sf s}_{156,12}$ &(1) & $\nu \Delta^6 \eta \epsilon$   \\
${\sf y}_{161,7}$ & $0$ &(1)&&&&\\
${\sf y}_{165,3}$ & $0$ &(1)&&&&\\
${\sf y}_{167,3}$ & ${\sf m}_{165,3}$  &(2) & ${\sf s}_{164,4}$ &  &(2)& $4\Delta^6 \bar{\kappa}$  \\
${\sf y}_{168,22}$ & $0$ &(3)&&&&\\
${\sf y}_{170,4}$ & ${\sf m}_{168,6}$ &(2) & $0$ & ${\sf s}_{168,0}$ & (1) & $8\Delta^7$  \\
\end{longtable}

 \section{New infinite families  }\label{Sec:A1}
The following commutative diagram shows the map of long exact sequences associated to the cofiber sequence \eqref{C3} and induced by the $\tmf$-Hurewicz map: 
\begin{equation} \label{LES3}
\begin{tikzcd}
\cdots \arrow{r} & \pi_{k} \Y \arrow{r}{i_3} \arrow{d}{{\sf h}_\tmf} & \pi_{k} \A_1 \arrow{r}{p_3} \arrow[d, ->>, "{\sf h}_\tmf"] & \pi_{k-3} \Y \rar["v_*"] \arrow{d}{{\sf h}_\tmf} & \cdots \\
\cdots \arrow{r} & \tmf_{k} \Y \arrow{r}{i_3} &\tmf_{k} \A_1 \rar["p_3"'] & \tmf_{k-3}\Y \rar["v_*"] & \cdots.
\end{tikzcd}
 \end{equation}
  
\begin{lemma} \label{lem:lift1}
 Any nonzero element  of the form  $p_3(a) \in \tmf_*\Y$  is in the Hurewicz image of $\tmf$. 
\end{lemma}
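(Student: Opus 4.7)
The plan is to observe that this lemma is a direct consequence of Pham's surjectivity theorem \eqref{HA1} combined with naturality of the Hurewicz map. Given a nonzero $y = p_3(a)$ with $a \in \tmf_k \A_1$, I would apply Pham's result that ${\sf h}_\tmf : \pi_*\A_1 \twoheadrightarrow \tmf_*\A_1$ is surjective to produce a lift $\widetilde{a} \in \pi_k \A_1$ with ${\sf h}_\tmf(\widetilde{a}) = a$. Then I would push $\widetilde{a}$ forward along the pinch map to obtain $\widetilde{y} := p_3(\widetilde{a}) \in \pi_{k-3}\Y$.

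The only remaining step is to verify that $\widetilde{y}$ lifts $y$ under the Hurewicz homomorphism, and this is immediate from the commutativity of the left square in diagram \eqref{LES3}: we have
\[
{\sf h}_\tmf(\widetilde{y}) = {\sf h}_\tmf(p_3(\widetilde{a})) = p_3({\sf h}_\tmf(\widetilde{a})) = p_3(a) = y,
\]
so $y$ is in the $\tmf$-Hurewicz image on $\Y$, as required.

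There is essentially no obstacle here beyond invoking Pham's theorem; the nonzero hypothesis is present only to make the conclusion non-vacuous. The lemma's value lies not in the difficulty of its proof but in its role as the engine for the next stage of the argument: combined with the explicit computation of $p_3(\tmf_*\A_1) \subseteq \tmf_*\Y$ tabulated in \cref{Table:A1lifts}, it will allow us to certify that each candidate class in column one of that table lifts to $\pi_*\Y$, after which the maps $p_2$ and $p_1$ can be composed to descend these lifts all the way to $\pi_*\SS$ and produce the families of \cref{MT:Families}.
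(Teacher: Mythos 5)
Your argument is exactly the paper's proof, spelled out in more detail: the paper simply cites Pham's surjectivity of ${\sf h}_\tmf$ on $\pi_*\A_1$ together with the commutativity of \eqref{LES3}, which is precisely the lift-and-pushforward computation you perform. Correct, and the same approach.
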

\begin{proof}
This is a direct consequence of the fact that the $\tmf$-Hurewicz map  for $\A_1$ is a surjection \cite{Pha23} in \eqref{LES3}. 
 \end{proof}
 
 Next, we study the commutative diagram of long exact sequences 
 \begin{equation} \label{LES2}
\begin{tikzcd}
\cdots \arrow{r} & \pi_{k-3} \M \arrow{r}{i_2} \arrow{d}{{\sf h}_\tmf} & \pi_{k-3} \Y \arrow{r}{p_2} \arrow[d, "{\sf h}_\tmf"] & \pi_{k-5} \M \rar["\eta_*"] \arrow{d}{{\sf h}_\tmf} & \cdots \\
\cdots \arrow{r} & \tmf_{k-3} \M \rar["i_2"'] &\tmf_{k-3} \Y \rar["p_2"'] & \tmf_{k-5}\M \rar["\eta_*"'] & \cdots.
\end{tikzcd}
 \end{equation}
associated to the cofiber sequence \eqref{C2}.  
\begin{lemma}  \label{lem:lift2} 
Any nonzero element  of the form  $p_2(p_3(a)) \in \tmf_*\M$  is in the Hurewicz image of $\tmf$. 
\end{lemma}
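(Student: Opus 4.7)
The plan is an immediate two-step diagram chase, building directly on \Cref{lem:lift1}. First, I would invoke \Cref{lem:lift1} for the nonzero element $p_3(a) \in \tmf_{k-3}\Y$ to produce a topological lift $\widetilde{y} \in \pi_{k-3}\Y$ satisfying ${\sf h}_\tmf(\widetilde{y}) = p_3(a)$. This is the only genuinely nontrivial ingredient, and it has already been done; it rests on Pham's surjectivity result \cite{Pha23} for the $\tmf$-Hurewicz homomorphism of $\A_1$.

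Second, I would push $\widetilde{y}$ through the topological pinch map $p_2 \colon \Y \to \Sigma^2 \M$ coming from the cofiber sequence \eqref{C2}, obtaining $p_2(\widetilde{y}) \in \pi_{k-5} \M$. By naturality of the $\tmf$-Hurewicz homomorphism, i.e.\ commutativity of the middle square of \eqref{LES2}, we have
\[ {\sf h}_\tmf(p_2(\widetilde{y})) \;=\; p_2({\sf h}_\tmf(\widetilde{y})) \;=\; p_2(p_3(a)), \]
so $p_2(p_3(a))$ is realized in the image of ${\sf h}_\tmf \colon \pi_{k-5} \M \to \tmf_{k-5}\M$, as desired.

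I do not foresee any real obstacle: the lemma is essentially a formal consequence of \Cref{lem:lift1} together with naturality, and requires no additional input about $\tmf_*\M$ or $\tmf_*\Y$. The conceptual point is that the Hurewicz surjectivity for $\A_1$ automatically propagates along the composite pinch maps of \eqref{eq:p3p2p1} by functoriality alone, without needing Hurewicz surjectivity for $\M$ or $\Y$ themselves. Indeed, iterating the same chase one more step with the cofiber sequence \eqref{C1} would yield the analogous statement for $p_1 \circ p_2 \circ p_3$, and in combination with the explicit $\tmf_*$-targets read off from \Cref{Table:A1lifts} this pattern is exactly what one needs to detect the lifts in the stable stems appearing in \Cref{MT:Families} and \Cref{MT:tordiv}.
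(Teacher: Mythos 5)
Your proof is correct and matches the paper's argument exactly: the paper likewise notes that $p_2(p_3(a)) \neq 0$ forces $p_3(a) \neq 0$, applies \Cref{lem:lift1} to obtain a lift $\widetilde{y} \in \pi_*\Y$ with ${\sf h}_\tmf(\widetilde{y}) = p_3(a)$, and concludes by commutativity of \eqref{LES2}. Nothing is missing.
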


 \begin{proof} If $p_2(p_3(a)) \neq 0$ then, in particular, $p_3(a)\neq 0$. Thus, by \Cref{lem:lift1}, there exists \[ \tilde{y} \neq 0 \in \pi_*\Y\]  such that ${\sf h}_{\tmf}(\tilde{y}) = p_3(a)$. The result then follows from commutativity of \eqref{LES2}. 
 \end{proof}

 \begin{proposition}\label{rmk:injection}
The action of $\Delta^8$  is faithful on  $\tmf_*\A_1$, $\tmf_*\Y$, $\tmf_*\M$,   $\tmf_*$,  the cokernel of the $\tmf$-Hurewicz map, and also on the $\tmf$-Hurewicz image restricted to $\tmf_*^{\sf tor}$ except for nonzero integer multiples of $\nu$. 
 \end{proposition}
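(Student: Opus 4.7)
The plan is to verify $\Delta^8$-faithfulness in each of the listed cases by direct inspection of the relevant Adams--Novikov spectral sequences, leveraging the $192$-periodicity of the $E_\infty$-pages.

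For $\tmf_*$ itself, I would invoke Bauer's calculation \cite{Bau08, DFHH14}: the class $\Delta^8 \in \tmf_{192}$ is a permanent cycle, the $E_\infty$-page of the elliptic spectral sequence is $\Delta^8$-periodic, and multiplication by $\Delta^8$ is injective on this associated graded. Since the Adams--Novikov filtration on $\tmf_*$ is complete Hausdorff and bounded in each stem, injectivity on the associated graded lifts to injectivity on $\tmf_*$. For the $\tmf_*$-modules $\tmf_*\M$, $\tmf_*\Y$, and $\tmf_*\A_1$, whose $E_\infty$-pages are computed in \cite{BBPX22, Pha23} (and used extensively in \Cref{SS:lift} above), the same argument goes through upon direct inspection of the charts \cite[Figs. 8, 9, 21, 22]{BBPX22}.

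For the cokernel of the $\tmf$-Hurewicz map, I would first check that $\mr{img}({\sf h}_\tmf) \subseteq \tmf_*$ is closed under $\Delta^8$-multiplication, so that the action descends. This follows because the $192$-periodic families constructed in \cite{BHHM20, BMQ23} produce a class in $\pi_{192}\SS$ whose Hurewicz image is a nonzero multiple of $\Delta^8$; combined with the $\tmf_*$-module structure on $\mr{img}({\sf h}_\tmf)$, this forces closure. Injectivity on the cokernel then amounts to the statement that $\Delta^8 x \in \mr{img}({\sf h}_\tmf)$ implies $x \in \mr{img}({\sf h}_\tmf)$, which can be read off from the explicit description of the Hurewicz image in \cite{BMQ23}, where $\Delta^8$-periodicity of the image is transparent.

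The final case---the Hurewicz image restricted to $\tmf_*^{\sf tor}$ except nonzero integer multiples of $\nu$---is the most delicate, and I expect it to be the main obstacle, because the exception is genuine: the orders of $n\nu \in \tmf_3$ and of $\Delta^8 \cdot n\nu \in \tmf_{195}$ need not agree, and this failure of faithfulness on the additive subgroup $\ZZ\nu$ must be isolated and excluded. I would proceed by enumerating the torsion generators in $\mr{img}({\sf h}_\tmf)$ as classified in \cite{BMQ23}, and verifying $\Delta^8$-injectivity case by case against the Bauer chart. The bookkeeping step---separating the generic torsion families, whose full $\Delta^8$-periodicity is visible from the chart, from the genuinely exceptional integer multiples of $\nu$---is where the real work lies.
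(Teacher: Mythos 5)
Your overall strategy -- reducing each case to the published computations in \cite{Bau08}, \cite{BBPX22}, \cite{Pha23}, and \cite{BMQ23} -- is exactly what the paper does; its proof is a one-line citation to those sources for each module in the list. However, one step in your treatment of the cokernel is genuinely wrong. You justify closure of $\mr{img}({\sf h}_\tmf)$ under $\Delta^8$-multiplication by asserting that the periodic families of \cite{BHHM20, BMQ23} produce a class in $\pi_{192}\SS$ whose Hurewicz image is a nonzero integer multiple of $\Delta^8$. No such class can exist: by Serre's theorem $\pi_{192}\SS$ is a finite group, so its image in $\tmf_{192}$ lands in the torsion subgroup, whereas $n\Delta^8$ generates a free summand for $n \neq 0$. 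Moreover, the auxiliary claim that $\mr{img}({\sf h}_\tmf)$ carries a $\tmf_*$-module structure is also unjustified -- the image of a ring map is a subring, closed under multiplication by its own elements, not a priori by all of $\tmf_*$ (and if it were a $\tmf_*$-submodule you would not need the phantom class in $\pi_{192}\SS$ at all). The closure of the Hurewicz image under $\Delta^8$, and the implication $\Delta^8 x \in \mr{img}({\sf h}_\tmf) \Rightarrow x \in \mr{img}({\sf h}_\tmf)$, are instead part of the \emph{content} of \cite[Theorem 1.2]{BMQ23}, which describes the image explicitly as a $192$-periodic pattern; they must be quoted from there, not derived by the mechanism you propose.

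With that step replaced by a direct appeal to \cite[Theorem 1.2]{BMQ23} (which you already invoke for the injectivity check on the cokernel), the rest of your argument is sound and coincides with the paper's: inspection of the $\Delta^8$-periodic $\mr{E}_\infty$-pages for $\tmf_*$, $\tmf_*\M$, $\tmf_*\Y$, $\tmf_*\A_1$, and a case-by-case check against the explicit description of the Hurewicz image for the final two items, with the integer multiples of $\nu$ correctly isolated as the genuine exception.
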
 
\begin{proof}
The faithfulness of the action of $\Delta^8$ on each module follows from the literature: For $\tmf_*$, see \cite{Bau08}. For $\tmf_*\A_1$, see \cite{Pha23}. For $\tmf_*\Y$ and $\tmf_*\M$, see \cite{BBPX22}. For image and cokernel of the $\tmf$-Hurewicz map, see \cite[Theorem 1.2]{BMQ23}. 
    \end{proof}

 \subsection{Infinite families in $2$-local stable stems } \label{sec:proofinffamily} \ 
 \begin{convention}
For a nonzero class $x\in \tmf_* \mr{X}$ we will denote by $\widetilde{x}$ any class in $\pi_* \mr{X}$ with the property that ${\sf h}_{\tmf}(\widetilde{x})=x$.  
\end{convention}

 Our final step  is studying the commutative diagram of long exact sequences 
 \begin{equation} \label{LES1}
\begin{tikzcd}
\cdots \arrow{r} & \pi_{k-5} \SS \arrow{r}{i_1} \dar["{{\sf h}_\tmf}"'] & \pi_{k-5} \M \arrow{r}{p_1} \arrow[d, "{\sf h}_\tmf"'] & \pi_{k-6} \SS \rar["\cdot 2"] \arrow{d}{{\sf h}_\tmf} & \cdots \\
\cdots \arrow{r} & \tmf_{k-5}  \rar["i_1"'] &\tmf_{k-5} \M \rar["p_1"'] & \tmf_{k-6} \rar["\cdot 2"'] & \cdots
\end{tikzcd}
 \end{equation}
 associated to the cofiber sequence \eqref{C1}. 

Consider an element of the form $p_2(p_3(a))\in\tmf_{k-5}\mr{M}$ for some $a\in \tmf_k\mr{A}_1$. 

Suppose first that $ p_1(p_2(p_3(a)))  \neq 0$. Then it follows from \eqref{LES3}, \Cref{lem:lift2}, and \Cref{rmk:injection} that there is  a $192$-periodic infinite family 
\[ 
\{ \widetilde{\sf s}_{k-6 + 192i }\in \pi_{k-6 + 192i}(\SS): i \in \NN \} 
\]
such that 
\begin{enumerate}
\item  $ {\sf h}_\tmf(\widetilde{\sf s}_{k-6})  = p_1(p_2(p_3(a))) $, 
\item $  {\sf h}_\tmf(\widetilde{\sf s}_{k-6 + 192i}) \neq 0 $ for all $i \in \NN$. 
\end{enumerate}


Now we consider the case $p_1(p_2(p_3(a)))  = 0$.   
 \begin{thm}\label{thm:three-options} 
Let $a \in \tmf_{k} \A_1$ be an element such that the class $  m := p_2(p_3(a))$ is nonzero in $\tmf_{k-5}\M$ and satisfies $p_1(m) = 0$ in $\tmf_*$. Then $m$ admits a lift along $i_1$ in \eqref{tmfLES3}. 
\begin{enumerate}[(I)] 
\item  If $i_1^{-1}(m) \cap \im({\sf h}_{\tmf}) \cap \tmf_{k-5}^{\sf tor} \neq \emptyset, $  and $k-5>3$, then there exists a $192$-periodic infinite family of nonzero elements in the stable stems,$$\{ \widetilde{\sf s}_{k-5 + 192i }\in \pi_{k-5 + 192i}(\SS): i \in \NN \}$$such that $i_1({\sf h}_\tmf(\widetilde{\sf s}_{k-5})) = m$ and the $\tmf$-Hurewicz image ${\sf h}_\tmf(\widetilde{\sf s}_{k-5 + 192i})$ is nonzero for all $i \in \NN$.
\item If $i_1^{-1}(m) \cap \im({\sf h}_{\tmf}) = \emptyset,$ then there exists a $192$-periodic infinite family of elements in the stable stems$$\{ \bar{\sf s}_{k-6 + 192i }\in \pi_{k-6 + 192i}(\SS): i \in \NN \}$$such that $\bar{\sf s}_{k-6 + 192i} \neq 0$ and ${\sf h}_\tmf(\bar{\sf s}_{k-6 + 192i}) = 0$ for all $i \in \NN$.
\end{enumerate}
 \end{thm}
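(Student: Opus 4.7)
The claim that $m$ admits a lift along $i_1$ is immediate from exactness of \eqref{tmfLES3}: $p_1(m)=0$ forces $m \in \im(i_1)$. The strategy for both parts combines four tools: Pham's surjectivity ${\sf h}_\tmf \colon \pi_*\A_1 \twoheadrightarrow \tmf_*\A_1$; a $v_2^{32}$-self-map $v\colon \Sigma^{192}\A_1 \to \A_1$ (inducing multiplication by $\Delta^8$ on $\tmf_*\A_1$); the $\Delta^8$-faithfulness assertions of \Cref{rmk:injection}; and the $\tmf_*$-linearity of $i_1, p_1, p_2, p_3$.

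For Case (I), pick $s \in i_1^{-1}(m)\cap \im({\sf h}_\tmf) \cap \tmf_{k-5}^{\sf tor}$ and choose $\widetilde{\sf s}_{k-5} \in \pi_{k-5}\SS$ with ${\sf h}_\tmf(\widetilde{\sf s}_{k-5})=s$, so $i_1({\sf h}_\tmf(\widetilde{\sf s}_{k-5}))=i_1(s)=m$. Since $k-5>3$, the class $s$ avoids the $\nu$-exception in \Cref{rmk:injection}, so $\Delta^{8i}s$ is nonzero and still lies in $\im({\sf h}_\tmf)\cap \tmf_*^{\sf tor}$ for every $i\geq 0$. Taking Hurewicz preimages yields the desired $192$-periodic family.

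For Case (II), lift $a$ to $\widetilde{a}\in \pi_k\A_1$ via Pham, and with $p\colon \A_1 \to \Sigma^6\SS$ the pinch map of \eqref{eq:p3p2p1} set
\[
\bar{\sf s}_{k-6+192i} \ := \ p(v^i(\widetilde{a})) \ \in \ \pi_{k-6+192i}\SS.
\]
By $\tmf_*$-linearity, ${\sf h}_\tmf(\bar{\sf s}_{k-6+192i})=\Delta^{8i}\cdot p_1(m)=0$. For nonvanishing when $i=0$: if $\bar{\sf s}_{k-6}=0$, then $p_2p_3(\widetilde{a})\in \ker(p_1)=\im(i_1)$ in homotopy, so $p_2p_3(\widetilde{a})=i_1(\widetilde{u})$ for some $\widetilde{u}\in \pi_{k-5}\SS$; applying ${\sf h}_\tmf$ places ${\sf h}_\tmf(\widetilde{u})$ in $i_1^{-1}(m)\cap \im({\sf h}_\tmf)$, contradicting the hypothesis of Case~(II).

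The main obstacle is propagating nonvanishing to all $i$. The plan is to show that the hypothesis $i_1^{-1}(m)\cap \im({\sf h}_\tmf)=\emptyset$ upgrades to $i_1^{-1}(\Delta^{8i}m)\cap \im({\sf h}_\tmf)=\emptyset$ for all $i\geq 0$, after which the base-case contradiction applied to $v^i(\widetilde{a})$ closes Case~(II). Using $\ker(i_1)=2\cdot \tmf_*$, the hypothesis of Case~(II) is equivalent to the nonvanishing of a distinguished class $[s]$ in the quotient $\coker({\sf h}_\tmf)/(2\cdot \coker({\sf h}_\tmf))$ in the appropriate degree. The $\Delta^8$-injectivity on $\coker({\sf h}_\tmf)$ provided by \Cref{rmk:injection}, combined with $\tmf_*$-linearity of the action of $2$, is exactly what is needed to promote this nonvanishing to $[\Delta^{8i}s]\neq 0$ in the corresponding quotient at level $k-5+192i$; the bookkeeping of this descent is the delicate part of the argument.
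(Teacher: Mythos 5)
Your argument for the existence of the lift, for Case~(I), and for the base case $i=0$ of Case~(II) matches the paper's proof: the paper likewise produces the Case~(I) family by applying the $\Delta^8$-faithfulness of \Cref{rmk:injection} to the chosen lift $s_{k-5}$, and in Case~(II) it takes Hurewicz lifts $\widetilde{m}_{k-5+192i}\in\pi_*\M$ of $\Delta^{8i}\cdot m$ and sets $\bar{\sf s}_{k-6+192i}=p_1(\widetilde{m}_{k-5+192i})$, with nonvanishing coming from exactly the exactness-plus-naturality contradiction you give.

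The gap is in the step you yourself flag as delicate: propagating the Case~(II) hypothesis from $m$ to $\Delta^{8i}m$. Your reformulation is correct --- since $\ker(i_1)=2\tmf_{k-5}$, the hypothesis $i_1^{-1}(m)\cap\im({\sf h}_{\tmf})=\emptyset$ is equivalent to $[s]\neq 0$ in $\coker({\sf h}_{\tmf})_{k-5}\otimes\mathbb{Z}/2$. But the tool you propose for promoting this to level $k-5+192i$, namely the faithfulness of $\Delta^8$ on $\coker({\sf h}_{\tmf})$ from \Cref{rmk:injection}, is not sufficient: injectivity of a map of abelian groups does not imply injectivity of its reduction mod $2$ (multiplication by $2$ on $\mathbb{Z}$ is the standard counterexample), so $\Delta^8$ could in principle annihilate $[s]$ in $\coker({\sf h}_{\tmf})\otimes\mathbb{Z}/2$ while acting faithfully on $\coker({\sf h}_{\tmf})$ itself. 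What is actually needed is that $\Delta^{8i}s\in\im({\sf h}_{\tmf})+2\tmf_{k-5+192i}$ forces $s\in\im({\sf h}_{\tmf})+2\tmf_{k-5}$, i.e.\ injectivity of $\Delta^8$ on $\coker({\sf h}_{\tmf})\otimes\mathbb{Z}/2$, and this does not follow from \Cref{rmk:injection} by general nonsense. The paper closes this point not structurally but by direct inspection of the explicit computation of the Hurewicz image in \cite{BMQ23} (in the application to \Cref{MT:Families} the relevant cosets are those of $8\Delta$, $4\Delta^2$, $8\Delta^3$, $(\eta\Delta)^3$, $2\Delta^4$, $8\Delta^5$, $8\Delta^7$ and their $\Delta^{8i}$-multiples, for which the check is immediate from the known answer). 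To complete your argument you must either add this inspection or prove the mod-$2$ injectivity statement separately.
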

\begin{proof} 
First, let $m = p_2(p_3(a)) \in \tmf_{k-5}\M$. Since $m \neq 0$, the faithfulness of the $\Delta^8$ action on $\tmf_*\M$ (by \Cref{rmk:injection}) ensures that the classes $\Delta^{8i} \cdot m$ are all nonzero:
\[ p_2(p_3(\Delta^{8i} \cdot a)) = \Delta^{8i} \cdot m \neq 0. \]
Furthermore, by \Cref{lem:lift2}, this family admits a lift to a family of nonzero elements in the homotopy groups $\pi_* \M$:
\begin{equation} \label{mtilde}
\{ \widetilde{m}_{k -5 + 192i} \in \pi_{k-5 + 192i}(\M): i \in \NN \}
\end{equation}
such that ${\sf h}_\tmf(\widetilde{m}_{k -5 + 192i}) =\Delta^{8i} \cdot m $ for all $i \in \NN$.

\textbf{Case (I): $i_1^{-1}(m)$ lifts to the Hurewicz image.}
Suppose $m$ admits a lift $s_{k-5} \in \tmf_{k-5}$ along $i_1$ such that $s_{k-5} \in \im({\sf h}_\tmf) \cap \tmf_{k -5}^{\sf tor}$. By \Cref{rmk:injection}, the family $\Delta^{8i} \cdot s_{k-5}$ is also contained in the Hurewicz image. We can then define the desired infinite family of elements
\[ \{ \widetilde{\sf s}_{k-5 + 192i} \in \pi_{k-5 + 192i}(\SS): i \in \NN \} \]
such that $ {\sf h}_{\tmf}( \widetilde{\sf s}_{k-5 + 192i}) = \Delta^{8i} \cdot s_{k-5}$. This family satisfies the stated nonzero Hurewicz image property.

\textbf{Case (II): $i_1^{-1}(m)$ does not lift to the Hurewicz image.}
If no lift of $m$ along $i_1$ is contained in $\im({\sf h}_\tmf)$, the same must hold for $\Delta^{8i} \cdot m$ for all $i \in \NN$ (by inspection of the Hurewicz image in \cite{BMQ23}). This implies that the element $\widetilde{m}_{k -5 + 192i} \in \pi_{*} \M$ must map to a nonzero class in $\pi_{*-1} \SS$ under $p_1$.  

Therefore, the family
\[ \{ \overline{\sf s}_{k-6 + 192i} = p_1(\widetilde{m}_{k -5 + 192i}): i \in \NN \} \]
is the desired infinite family with trivial Hurewicz image.
 \end{proof}
 \bigskip
 \begin{proof}[\bf Proof of \Cref{MT:Families}]
For each degree $k \in \{ 29, 53, 77, 80, 101, 119, 173 \}$, \Cref{Table:A1lifts} provides an element $a_k \in \tmf_{k}\A_1$ which satisfies the primary conditions of \Cref{thm:three-options}:
\begin{enumerate}[(i)]
\item $m_{k-5} = p_2(p_3(a_k)) \neq 0$, and
\item $p_1(m_{k-5}) = 0$.
\end{enumerate}
The table further establishes the existence of a lift $s_{k-5} \in \tmf_{k-5}^{\sf tor}$ such that $i_1(s_{k-5}) = m_{k-5}$.

The degrees of these lifts, $k-5 \in \{ 24, 48, 72, 75, 96, 114, 168 \}$, are the degrees where the $\tmf$-Hurewicz image is known to be trivial. Therefore, the lift $s_{k-5}$ is not contained in the Hurewicz image, which is the condition required for Case (II) of \Cref{thm:three-options}. The existence of the seven infinite families follows immediately.
\end{proof}
\begin{remark} \label[remark]{rmk:HurP}
To summarize, the existence of the seven infinite families described in \Cref{MT:Families} is a direct consequence of the following two facts:
\begin{enumerate}
\item The elements
\[
8\Delta, 4\Delta^2, 8 \Delta^3, (\eta \Delta)^3, 2\Delta^4, 8 \Delta^5, 8\Delta^7
\]
are not contained in the Hurewicz image in $\tmf_*$.
\item These elements are precisely the lifts along $i_1$ of nonzero classes in $\im(p_2 \circ p_3: \tmf_*\A_1 \to \tmf_{*-5}\M)$. 
\end{enumerate}
\end{remark}

 \subsection{ Infinite families in $\mr{T}(2)$-local stable stems } \label{sec:T2} \  

The infinite families identified in \Cref{sec:proofinffamily} descend to the $\mr{T}(2)$-local stable stems by construction, stemming from the definition of $\mr{T}(2)$-localization. We provide a general outline of this argument in the current subsection. Given that the telescope conjecture is false \cite{BHLS}, this naturally raises the question of whether these elements also descend to the $\mr{K}(2)$-local stable stems. While a nontrivial image in the $\mr{K}(2)$-local stable stems (a result detailed in the next subsection) is sufficient to guarantee a nontrivial image in the $\mr{T}(2)$-local stable stems, we present the current sketch to emphasize that the nontrivial $\mr{T}(2)$-local image of our infinite family is independent of the explicit $\mr{K}(2)$-local calculations that follow.

Let $\overline{s} \in \pi_{k-6}(\mathbb{S})$ be an element of the infinite family constructed in \Cref{MT:Families}. From our proof in \Cref{sec:proofinffamily}, we know $\overline{s}$ must arise from a composite:
\[\begin{tikzcd}
\overline{s}: \Sigma^{k-5} \mathbb{S} \rar["\widetilde{a}"] & \Sigma^{-5}\mr{A}_1 \ar[rr, "p_1 \circ p_2 \circ p_3"] && \mathbb{S}.
\end{tikzcd}\] 
Here, the $\tmf$-Hurewicz image of the first map, $a := {\sf h}_\tmf (\widetilde{a})$, and the subsequent classes $p_3 (a)$ and $p_2(p_3 (a))$ are all nonzero. Furthermore, any Hurewicz lift $\widetilde{m}_i \in \pi_*\M$ of the class $\Delta^{8i} \cdot p_2(p_3 (a)) $ maps to a nontrivial class in $\pi_*(\mathbb{S})$ under the map $p_1$ for all $i \in \NN$. By defining $\overline{s}_i := p_{1}(\widetilde{m}_i)$ we thus obtain the infinite family associated to $\overline{s}$.

In \cite{BEM17}, it is established that $\mr{A}_1$ admits a $v_2^{32}$-self-map
\[ \begin{tikzcd} v: \Sigma^{192} \A_1 \rar & \A_1 \end{tikzcd} \] 
which is detected by $\Delta^{8} \in \tmf_* $. This map satisfies the relation ${\sf h}_{\tmf}( v^{i} \circ \widetilde{a}) = \Delta^{8i} \cdot a$ for all $i \in \NN$, where $v^{i}$ denotes the $i$-fold composition of $v$. Therefore, one can choose the lift $\widetilde{m}_i$ above to be the class $p_2 \circ p_3 \circ v^{i} \circ \widetilde{a}$. This observation implies that every element $\overline{s}$ in \Cref{MT:Families} has a nontrivial image under the map:
\[ \begin{tikzcd}
\alpha_{1 \ast}: \pi_*(\mathbb{S}) \rar & \pi_*(\Phi_{\mr{A}_1}(\mathbb{S})),
\end{tikzcd} \]
where $\Phi_{\mr{A}_1}(\mathbb{S})$ is the spectrum obtained by substituting $\mr{X} = \mr{A}_1$ and $\mr{E} = \mathbb{S}$ in the following general definition.

\begin{notn} For a spectrum $\mr{E}$ and a finite spectrum $\mr{X}$ equipped with a $v_n$--self-map $v: \Sigma^{|v|}\mr{X} \to \mr{X}$, we define the Bousfield-Kuhn functor $\Phi_{\mr{X}}(\mr{E})$ as the telescope:
\[\Phi_{\mr{X}}(\mr{E}) := \underset{\to}{\colim} \ \{ \mr{E}^{\mr{X}} \overset{{v}^*}\longrightarrow \Sigma^{- |{v}|} \mr{E}^{ \mr{X} }\overset{{v}^*}\longrightarrow \Sigma^{- 2|{v}|}\mr{E}^{ \mr{X} } \longrightarrow \dots \}.\] 
We denote the natural map from $\mr{E}$ to $\Phi_{\mr{X}}(\mr{E})$ by $\alpha$ (sometimes with subscripts, such as $\alpha_{1}$ above).
\end{notn}

 \begin{notn} \label{notn:M}
Let $\mr{M}({\sf i})$ denote the cofiber of multiplication by $2^{\sf i}$ on the sphere spectrum $\SS$. Let $\mr{M}({\sf i},{\sf j})$ denote the cofiber of the $v_1^{\sf j}$-self-map on $\mr{M}({\sf i})$.
\end{notn}
 Let ${\sf p}$ denote the composite $p_1 \circ p_2 \circ p_3$. Then:
  \begin{proposition} \label{prop:pinchM14} The map ${\sf p}: \Sigma^{-6} \mr{A}_1 \longrightarrow \SS$ factors through $\Sigma^{-10} \mr{M}(1,4)$.  
  \end{proposition}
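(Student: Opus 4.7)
The plan is to identify the natural map $\Sigma^{-10}\M(1,4) \to \SS$ as the (shifted) top-cell pinch of $\M(1,4)$ and then reduce the statement to a cofiber-sequence obstruction. By the defining cofiber sequence $\Sigma^8\M \xrightarrow{v_1^4} \M \xrightarrow{j} \M(1,4) \xrightarrow{q} \Sigma^9\M$, this pinch factors as $p_1 \circ q : \Sigma^{-10}\M(1,4) \to \Sigma^{-1}\M \to \SS$. Since ${\sf p} = p_1 \circ (p_2 \circ p_3)$, it suffices to exhibit a lift of $p_2 \circ p_3 : \Sigma^{-6}\A_1 \to \Sigma^{-1}\M$ along $q$.

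By the standard cofiber-sequence obstruction theory, such a lift exists if and only if the composite $v_1^4 \circ p_2 \circ p_3 : \A_1 \to \Sigma^{-3}\M$ is null-homotopic. To verify this, I would invoke the $v_1^4$-compatibility between the self-maps on $\Y$ and $\M$: for a suitable choice of $v_1^4$-self-map on $\M$, the square
\[
\begin{tikzcd}
\Sigma^8\Y \arrow[r, "v^4"] \arrow[d, "\Sigma^8 p_2"'] & \Y \arrow[d, "p_2"] \\
\Sigma^{10}\M \arrow[r, "\Sigma^2 v_1^4"'] & \Sigma^2\M
\end{tikzcd}
\]
commutes up to homotopy, where $v^4$ denotes the fourth iterate of the $v_1^1$-self-map $v$ on $\Y$. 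Granting this, $v_1^4 \circ p_2 \circ p_3 = p_2 \circ v^4 \circ p_3$, and the defining cofiber sequence $\Sigma^2\Y \xrightarrow{v} \Y \to \A_1 \xrightarrow{p_3} \Sigma^3\Y \xrightarrow{\Sigma v} \Sigma\Y$ forces $v \circ p_3 \simeq 0$, hence $v^4 \circ p_3 \simeq 0$ by iteration. The obstruction therefore vanishes and the desired lift exists.

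The principal obstacle lies in securing the $v_1^4$-compatibility displayed above. Because $\M$ at $p=2$ admits only a $v_1^4$-self-map (and not a $v_1^1$-self-map), the compatibility must be arranged at the fourth power, and a priori the maps $p_2 \circ v^4$ and $\Sigma^2 v_1^4 \circ \Sigma^8 p_2$ may differ by a class of higher Adams--Novikov filtration. Using the essential uniqueness of $v_n$-self-maps in the sense of Hopkins--Smith, one can modify the chosen $v_1^4$-self-map on $\M$ by a nilpotent correction to enforce the square; verifying that the relevant obstruction lies in a vanishing bidegree reduces to an Adams--Novikov computation in a low-dimensional region, analogous in spirit to the $v_1$-self-map calculations already performed in \cite{BBPX22}.
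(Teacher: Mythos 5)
Your reduction is sound and, in substance, it is the paper's own argument in a different packaging. The paper writes down the homotopy-commutative square
\[
\begin{tikzcd}
\Sigma^6 \Y \rar["v_1"] \dar["{\sf p}_2"'] & \Sigma^4 \Y \dar["{\sf p}_2 \circ v_1^3"] \\
\Sigma^8 \M \rar["v_1^4"'] & \M
\end{tikzcd}
\]
— which encodes exactly the compatibility $v_1^4 \circ {\sf p}_2 \simeq {\sf p}_2 \circ v^4$ that you need — and passes to horizontal cofibers to obtain a map $\Sigma^4\A_1 \to \M(1,4)$ compatible with the pinch maps; you instead run the equivalent obstruction-theoretic criterion that ${\sf p}_2 \circ {\sf p}_3$ lifts along $q$ if and only if $v_1^4 \circ {\sf p}_2 \circ {\sf p}_3 \simeq 0$, and then use $v \circ {\sf p}_3 \simeq 0$ from the defining cofiber sequence of $\A_1$. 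That formal portion of your argument is correct.

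The gap is exactly where you flag it: the commutativity of the square is the entire content of the proposition, and your proposed justification does not close it. Hopkins--Smith asymptotic uniqueness only guarantees that $v_1^4 \circ {\sf p}_2$ and ${\sf p}_2 \circ v^4$ agree after composing with a common, possibly large, iterate of the self-maps; it gives no control at the fourth power itself. Your fallback of modifying the $v_1^4$-self-map of $\M$ by a nilpotent correction requires the difference class $v_1^4 \circ {\sf p}_2 - {\sf p}_2 \circ v^4 \in [\Sigma^8\Y, \Sigma^2\M]$ to lie in the image of $({\sf p}_2)^* \colon [\Sigma^{10}\M, \Sigma^2\M] \to [\Sigma^8\Y, \Sigma^2\M]$, which is not automatic, and in any case the $v_1^4$-self-map of $\M = \SS/2$ leaves essentially no room for such corrections; so you are still left with precisely the low-dimensional computation you defer. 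The paper supplies this input by a different, concrete device: it detects the composite $\Sigma^6\SS \hookrightarrow \Sigma^6\Y \xrightarrow{v_1^3} \Y \xrightarrow{{\sf p}_2} \Sigma^2\M$ in ${\sf ko}$-homology (using the structure of ${\sf ko}_*\M$ and the vanishing ${\sf ko}_6\M = 0$ in the relevant exact sequence) and uses this to pin down ${\sf p}_2 \circ v^4$ against $v_1^4 \circ {\sf p}_2$. To complete your proof you would need to carry out either that ${\sf ko}$-detection or the Adams--Novikov computation you allude to; as written, the one nontrivial step is asserted rather than proved.
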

  \begin{proof} Let ${\sf ko}$ denote the connective real $\mr{K}$-theory. Since  ${\sf ko}_6 \mr{M} \cong 0$ it follows that the composite 
  \[ 
  \begin{tikzcd}
  \Sigma^6 \mathbb{S} \rar[hook] & \Sigma^6 \Y \rar["v_1^3"] & \Y  \rar[two heads, "{\sf p}_2"] &  \Sigma^2 \mr{M} 
  \end{tikzcd}
  \]
  is nonzero in ${\sf ko}$-homology, and hence, in stable homotopy. Further, we have a commutative diagram 
\[ 
\begin{tikzcd}
\Sigma^6 \Y \rar["v_1"] \dar["{\sf p}_2"'] & \Sigma^4 \Y \dar["{\sf p}_2 \circ v_1^3 "] \\
\Sigma^8 {\mr{M}} \rar["v_1^4"'] & {\mr{M}} 
\end{tikzcd}
\]
which implies that there is a map $\Sigma^4 \mr{A}_1 \longrightarrow \mr{M}(1,4)$ which factors the pinch map of  $\Sigma^4 \mr{A}_1$ to its top cell. 
  \end{proof}
  A consequence of \Cref{prop:pinchM14} is that  we have a directed system 
  \begin{equation} \label{T2seq}
  \begin{tikzcd}
  \Sigma^{-6}\mr{A}_1 \rar \ar[dd, "{\sf p}"'] &  \Sigma^{-10} \mr{M}(1, 4) \rar \ar[ddl] &  \Sigma^{-18} \mr{M}(2, 8) \rar \ar[ddll] &  \dots  \\
   &&   \dots  \\
  \SS
  \end{tikzcd}
  \end{equation}
  of type $2$ spectra which is cofinal  among all type $2$ spectra admitting a `pinch' map to $\SS$. 
  \begin{notn} Let  $\Phi_{k }(-) $ denote   $\Phi_{\mr{V}_k }( - ) $, where  $\mr{V}_k$ is the $k$-th entry of the sequence  \eqref{T2seq}. Let $\alpha_k$ denote the natural map from $\mathbb{S}$ to $\Phi_{ k}(\mathbb{S})$. 
  \end{notn}
  
  \begin{thm}\label{MT:T2}
\emph{All elements listed in \Cref{MT:Families}  have nonzero images in the $\mr{T}(2)$-local stable stems. }
  \end{thm}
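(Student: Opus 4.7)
The plan is to combine the non-vanishing of each $\overline{s}$ in \Cref{MT:Families} under the map $\alpha_{1\ast}: \pi_*\SS \to \pi_*\Phi_1(\SS)$ (already recorded in the discussion preceding the theorem, via iteration of the $v_2^{32}$-self-map on $\mr{A}_1$) with the observation that $\Phi_1(\SS)$ is itself a $\mr{T}(2)$-local spectrum. Together with the universal property of Bousfield localization, this will yield the desired nontriviality in the $\mr{T}(2)$-local stable stems.

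First, I would verify that $\Phi_1(\SS)$ is $\mr{T}(2)$-local. Unwinding the colimit definition of $\Phi_{\Sigma^{-6}\mr{A}_1}(\SS)$ and applying Spanier--Whitehead duality,
\[
\Phi_1(\SS) \;=\; \colim_i\bigl(\Sigma^{-192i}\,F(\Sigma^{-6}\mr{A}_1, \SS)\bigr) \;\simeq\; (\mr{D}v)^{-1}\,\mr{D}(\Sigma^{-6}\mr{A}_1),
\]
where $\mr{D}v$ is the dualized $v_2^{32}$-self-map. Since $\mr{A}_1$ is type $2$, so is its Spanier--Whitehead dual, and $\mr{D}v$ is a $v_2$-self-map of $\mr{D}(\Sigma^{-6}\mr{A}_1)$ because duality exchanges $\mr{K}(2)$-homology and $\mr{K}(2)$-cohomology up to a shift. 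Therefore $\Phi_1(\SS)$ is a telescope of a $v_2$-self-map on a type~$2$ spectrum, and hence is $\mr{T}(2)$-local.

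With $\Phi_1(\SS)$ identified as $\mr{T}(2)$-local, the universal property of $L_{\mr{T}(2)}$ provides a unique factorization of $\alpha_1: \SS \to \Phi_1(\SS)$ through the $\mr{T}(2)$-localization map $\SS \to L_{\mr{T}(2)}\SS$. Consequently, any class in $\pi_*\SS$ whose image under $\alpha_{1\ast}$ is nonzero must also have nonzero image in $\pi_*L_{\mr{T}(2)}\SS$, and applying this to each $\overline{s}$ listed in \Cref{MT:Families} completes the argument. I do not foresee a substantive obstacle here: the main non-vanishing input is already secured in the preceding discussion, and the recognition of $\Phi_1(\SS)$ as a $\mr{T}(2)$-local telescope is a routine consequence of Spanier--Whitehead duality.
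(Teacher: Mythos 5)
Your argument is correct, but it closes the proof by a different mechanism than the paper does. Both proofs rest on the same essential input, recorded in the discussion preceding the theorem: each $\overline{s}$ from \Cref{MT:Families} has nonzero image under $\alpha_{1\ast}\colon \pi_*\SS \to \pi_*\Phi_1(\SS)$, which comes from realizing the $192$-periodicity by the $v_2^{32}$-self-map of $\A_1$. From there the paper constructs the tower \eqref{T2seq} of generalized Moore spectra (via \Cref{prop:pinchM14}), identifies $\SS_{\mr{T}(2)}$ with the inverse limit $\varprojlim \Phi_k(\SS)$ over that cofinal system, and concludes using the Milnor $\lim^1$ sequence. You instead observe that $\Phi_1(\SS)$ is itself already $\mr{T}(2)$-local: by Spanier--Whitehead duality it is the telescope of the dualized $v_2$-self-map on the type $2$ finite spectrum $\mr{D}(\Sigma^{-6}\A_1)$, and such telescopes are $\mr{T}(2)$-local (indeed $v^{-1}\mr{F} \simeq L_{\mr{T}(2)}\mr{F}$ for $\mr{F}$ finite of type exactly $2$, by Miller and Mahowald--Sadofsky). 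The universal property of Bousfield localization then factors $\alpha_1$ through $\SS \to L_{\mr{T}(2)}\SS$, and nontriviality follows. This is a genuine shortcut: it requires neither \Cref{prop:pinchM14}, nor the cofinality of the tower among type $2$ spectra with pinch maps, nor the inverse-limit description of the $\mr{T}(2)$-local sphere. What the paper's route buys in exchange is the explicit placement of the detection inside the standard tower model of $\SS_{\mr{T}(2)}$ built from the $\mr{M}({\sf i},{\sf j})$; for the statement of \Cref{MT:T2} alone, your argument suffices and is the more economical one.
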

  \begin{proof}
By the standard theory of Bousfield-Kuhn functors (see \cite{Kuhn}) the $\mr{T}(2)$-local sphere spectrum is given by the inverse limit:
  \[ 
  \SS_{\mr{T}(2)} \simeq \underset{\leftarrow}\lim  \ \Phi_k(\SS). 
  \]
 We have already established that any element  $\overline{\sf s} \in \pi_*(\SS)$   listed in  \Cref{MT:Families}  has a nonzero image  under  the first map $\alpha_1$. The maps $\alpha_{k\ast}$ fit into the following diagram:   \[ 
  \begin{tikzcd} 
  \pi_*(\SS) \ar[dd,"\alpha_{1\ast}"'] \ar[rdd, "\alpha_{2\ast}"']  \ar[rrdd, ""']  \\
 & &  \dots  \\
   \pi_*(\Phi_1(\SS))   & \lar   \pi_*(\Phi_2(\SS))     & \lar  \pi_*(\Phi_3(\SS)) & \lar   \text{ } \cdots 
 \end{tikzcd}
 \]
 Since the image of $\overline{\sf s}$ is nonzero under $\alpha_1$, its image in the inverse limit $\underset{\leftarrow}\lim \ \pi_* \Phi_k(\SS)$ is also nonzero. The final result then follows from the fact that the natural map
  \[ 
 \begin{tikzcd} 
 \pi_* ( \SS_{\mr{T}(2)}) \cong \pi_*( \underset{ \leftarrow  }\lim \ {\Phi_k(\SS)}) \ar[r, two heads] & \underset{ \leftarrow  }\lim \   \pi_*  \Phi_k(\SS) 
 \end{tikzcd}
 \]
  is a surjection (with Milnor $\lim^1$ term as the kernel). 
  \end{proof}

\subsection{Infinite families in $\mr{K}(2)$-local stable stems} \label{sec:K2} \

\begin{notn}
For notational simplicity, we deviate from the standard convention and let $\mr{TMF}$ denote the $\mr{K}(2)$-localization of $\tmf$. 
\end{notn}
 Since $\Delta^8$ acts faithfully on $\tmf^{\sma}_{2}$,  the natural $\mr{K}(2)$-localization map
\[ 
\begin{tikzcd}
\ell: \tmf \rar & \mr{TMF}
\end{tikzcd}
\]
induces an injection on stable homotopy groups. As a result, the computation of the $\tmf$-Hurewicz image, as established in \cite[Theorem 1.2]{BMQ23}, effectively completes the calculation of the Hurewicz image of $\mr{TMF}$.

From the commutative diagram 
\[ 
\begin{tikzcd}
\pi_*\mathbb{S} \rar["{\sf h}_{\tmf}"] \dar & \tmf_* \dar \\
\pi_*\mathbb{S}_{\mr{K}(2)} \rar["{\sf h}_{\mr{TMF}}"] & \mr{TMF}_*
\end{tikzcd}
\]
we deduce that the elements of stable stems with a nontrivial $\tmf$-Hurewicz image remain nonzero after $\mr{K}(2)$-localization. However, this  argument fails for the elements listed in \Cref{MT:Families} because their $\tmf$-Hurewicz image is trivial. Therefore, to establish that these elements are nontrivial after $\mr{K}(2)$-localization, we must employ an argument very similar to that in the proof of \Cref{MT:Families} in \Cref{sec:proofinffamily}. A careful analysis is warranted, as the image of the $\mr{K}(2)$-local Hurewicz map
\begin{equation} \label{K2Hur}
\begin{tikzcd}
{\sf h}_{\mr{TMF}}: \pi_*\mathbb{S}_{\mr{K}(2)} \rar[] & \mr{TMF}_*
\end{tikzcd}
\end{equation}
is potentially larger than the Hurewicz image of $\mr{TMF}$.

\begin{notn} For any spectrum $\mr{X}$, let $\widehat{\mr{X}}$ denote the smash product $\mr{X} \wedge \SS_{\mr{K}(2)}$\footnote{Throughout this section it is important to distinguish between $\widehat{\mr{X}}$ and the $\mr{K}(2)$-localization of $\mr{X}$. These are not equivalent in general  because the $\mr{K}(2)$-localization functor is not smashing.}. 
\end{notn}

 The work in \cite{Pha23} shows that the $\mr{K}(2)$-local Hurewicz map of $\A_1$ 
\[ 
\begin{tikzcd}
{\sf h}_{\mr{TMF}}: \pi_* \widehat{\A}_1 \rar[two heads] & \mr{TMF}_*\A_1
\end{tikzcd}
\]
is a surjection. Since  $\Delta^8$ acts faithfully on $\tmf_*\A_1$, $\tmf_*\Y$, $\tmf_*\M$, and $\tmf_*$ (see \Cref{rmk:injection}), the natural map
\[ 
\begin{tikzcd}
\ell_\ast : \tmf_*\mr{X} \rar & \mr{TMF}_* \mr{X}
\end{tikzcd}
\]
is an injection  when $\mr{X}$ is $\A_1$, $\Y$, $\M$, or $\mathbb{S}$. This injectivity allows us to establish that 
\begin{itemize}
\item[-] the image of $\ell_\ast(a) \in \mr{TMF}_*\A_1$ under the composite map 
\[ 
\begin{tikzcd}
p_2 \circ p_3: \mr{TMF}_* \A_1 \rar & \mr{TMF}_* \M
\end{tikzcd}
\] is nonzero if and only if the original element $p_2(p_3(a)) \in \tmf_*\M$ is nonzero, and
\item[-] the image of $\ell_\ast(a) \in \mr{TMF}_*\A_1$ under the map 
\[ 
\begin{tikzcd}
p_1 \circ p_2 \circ p_3: \mr{TMF}_* \A_1 \rar &  \mr{TMF}_{*-6}
\end{tikzcd}
\] is zero if and only if the element $p_1(p_2(p_3(a))) \in \tmf_{*-6}$ is zero.
\end{itemize}
Therefore the proof of \Cref{MT:K2} can follow the exact same arguments as the proof of \Cref{MT:Families} (as detailed in \Cref{sec:proofinffamily}), provided that the elements in the set
\[ \{ 8\Delta, 4\Delta^2, 8 \Delta^3, (\eta\Delta)^3, 2\Delta^4, 8 \Delta^5, 8\Delta^7\}\] 
are not in the image of \eqref{K2Hur}, the $\mr{K}(2)$-local Hurewicz image of $\mr{TMF}_*$ (also see \Cref{rmk:HurP}).
 \begin{thm}\label{MT:K2}
 \emph{ All elements listed in \Cref{MT:Families}  have nonzero images in the  $\mr{K}(2)$-local stable stems. }
\end{thm}
\begin{proof} Since the elements  
\[ 8\Delta, 4\Delta^2, 8 \Delta^3,  2\Delta^4, 8 \Delta^5, 8\Delta^7 \]  
are of infinite order, and $\pi_*\widehat{\SS}$  is a finite group in degrees $24, 48, 72, 96, 120$, and $168$ (see \cite[Theorem A]{BSSW24}), they cannot be in the $\mr{K}(2)$-local Hurewicz image. The remaining element $(\eta\Delta)^3$ is treated separately 
in the subsequent \Cref{lem:K2etaDelta3}. Once its exclusion from the Hurewicz image is established, the proof of  \Cref{MT:Families} goes through \emph{mutatis mutandis} to yield the result. 
\end{proof}
\begin{lemma} \label{lem:K2etaDelta3} The element $(\eta \Delta)^3$ is not in the image of the $\mr{K}(2)$-local Hurewicz map of $\mr{TMF}$
\begin{equation} \label{K2HI}
\begin{tikzcd}
{\sf h}_{\mr{TMF}} : \pi_* \widehat{\SS}\rar &  \mr{TMF}_*.
\end{tikzcd}
\end{equation}
\end{lemma}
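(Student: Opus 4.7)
The element $(\eta\Delta)^3$ is $2$-torsion in $\mr{TMF}_{75}$, so the finiteness argument that handles the other six elements in the proof of \Cref{MT:K2} does not apply. Consequently, a more refined obstruction argument is needed, and this is the main technical difficulty.

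The plan is to pass to the $\mr{K}(2)$-local descent spectral sequence
\[
{^{\widehat{\SS}}\mr{E}}_2^{s,t} = H^s(\mathbb{G}_2; (E_2)_t) \Longrightarrow \pi_{t-s}\widehat{\SS}
\]
and compare it with the analogous descent spectral sequence
\[
{^{\mr{TMF}}\mr{E}}_2^{s,t} = H^s(\mathbb{G}_{48}; (E_2)_t) \Longrightarrow \mr{TMF}_{t-s}
\]
via the restriction map induced by $\mathbb{G}_{48} \hookrightarrow \mathbb{G}_2$; the Hurewicz map ${\sf h}_{\mr{TMF}}$ is compatible with this restriction on the level of $E_\infty$-pages. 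The class $\eta^3 \Delta^3$ is detected on ${^{\mr{TMF}}\mr{E}}_\infty^{3,\, 78}$, so any preimage $x \in \pi_{75}\widehat{\SS}$ would have to be detected on ${^{\widehat{\SS}}\mr{E}}_\infty^{s,\, 75+s}$ for some $s \leq 3$ by a permanent cycle that restricts to $\eta^3\Delta^3$. The strategy is thus to show no such class exists.

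The first step is to enumerate all candidate classes in $H^s(\mathbb{G}_2; (E_2)_{75+s})$ for $s \in \{0, 1, 2, 3\}$ whose restriction to $H^s(\mathbb{G}_{48}; (E_2)_{75+s})$ is the class representing $\eta^3\Delta^3$. This draws on the detailed analysis of Morava stabilizer group cohomology at the prime $2$ carried out in \cite{BG18, Bobkova20, BGH, BBGHPS24}. The second step is to rule out each candidate by verifying that it either supports a nonzero $d_r$-differential, is killed by an incoming differential, or has restriction incompatible with $\eta^3\Delta^3$ after accounting for hidden extensions. Alternatively (and likely more cleanly), one can invoke the explicit computation of $\pi_{75}\widehat{\SS}$ from \cite{BSSW24} -- the same source that underlies the finiteness statements cited in the proof of \Cref{MT:K2} -- and verify by direct enumeration that no element of $\pi_{75}\widehat{\SS}$ has Hurewicz image $(\eta\Delta)^3$.

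The main obstacle is the technical intricacy of the $\mr{K}(2)$-local descent spectral sequence at $p=2$: tracking hidden $2$- and $\eta$-extensions and any remaining unresolved differentials in the relevant bidegrees requires careful bookkeeping, since the cohomology of $\mathbb{G}_2$ at the prime $2$ is substantially more complicated than at odd primes. Once the candidate list is pinned down and each candidate eliminated, the lemma follows.
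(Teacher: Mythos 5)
Your proposal correctly isolates why $(\eta\Delta)^3$ needs separate treatment (it is $2$-torsion, so the finiteness-of-$\pi_*\widehat{\SS}$ argument used for the other six classes does not apply), but the route you then sketch has a genuine gap: all of the actual content is deferred to computations that are not available. The full descent spectral sequence $H^s(\mathbb{G}_2;(E_2)_t)\Rightarrow \pi_{t-s}\widehat{\SS}$ at $p=2$ is not computed in the literature in the range you need; enumerating all classes in filtration $s\le 3$ of stem $75$ that restrict to $\eta^3\Delta^3$ in $H^*(\mathbb{G}_{48};(E_2)_*)$, and then resolving differentials and hidden extensions, is essentially an open computational problem, not a citation. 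Your fallback is also not valid as stated: \cite{BSSW24} does not give an explicit computation of $\pi_{75}\widehat{\SS}$; its Theorem A yields finiteness in certain degrees via rationalization, which is exactly why the paper can use it for the six infinite-order classes $8\Delta, 4\Delta^2, 8\Delta^3, 2\Delta^4, 8\Delta^5, 8\Delta^7$ but not for $(\eta\Delta)^3$. So as written, the proposal is a plan whose two key steps each rest on input that does not exist.

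The paper's proof avoids $\mr{K}(2)$-local computations entirely by passing to the $v_1$-periodic (i.e.\ $\mr{K}(1)$-local) shadow: since $v_1^{-1}\mr{TMF}\simeq \mr{KO}^{\sma}_2(\!(\mathfrak{j}^{-1})\!)$, the composite of ${\sf h}_{\mr{TMF}}$ with $v_1$-localization factors through $\pi_*\mr{KO}^{\sma}_2$. Lifting $(\eta\Delta)^3$ to $\mr{TMF}_*\mr{M}(\infty)$ and inverting $c_4$ produces the class $\overline{v_1^{38}}\,\mathfrak{j}^{-3}$, whose negative power of $\mathfrak{j}$ places it outside the submodule $\mr{KO}^{\sma}_*\mr{M}(\infty)$, contradicting the factorization. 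If you want to salvage your approach, you would need to either carry out the $\mathbb{G}_2$-cohomology analysis in stem $75$ explicitly or find a reference that does; the $v_1$-localization argument is the mechanism that makes the paper's proof finite.
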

\begin{proof}  From \cite[Corollary 3]{Lau04} (also see \cite[pg. 193]{DFHH14}), we  get 
\[
v_1^{-1} \mr{TMF} \simeq \mr{TMF}_{\mr{K}(1)} \simeq  \left(\mr{KO}\llbracket \mathfrak{j}^{\pm 1}\rrbracket \right)^{\sma}_2 \simeq \mr{KO}^{\sma}_2 (\!( \mathfrak{j}^{-1})\!),
\]
where $\mr{KO}^{\sma}_2$ denotes the $2$-adic completion of $\mr{KO}$. Therefore the composition of the $\mr{K}(2)$-local Hurewicz map with the $\mr{K}(1)$-localization map factors through $\mr{KO}^{\sma}_2$:
\[ 
\begin{tikzcd}
& \pi_*\mr{KO}_2^{\sma}  \ar[dr, hook] \\
\pi_* \widehat{\SS} \rar["{\sf h}_{\mr{TMF}}"'] \ar[ru] & \pi_* \mr{TMF} \rar & v_1^{-1} \mr{TMF}_* \cong \pi_*\mr{KO}^{\sma}_2 (\!( \mathfrak{j}^{-1})\!)
\end{tikzcd}
\]
Now we  implement an argument very similar to that of \cite[Thm. 6.1]{BMQ23}.

More precisely, we observe that $(\eta \Delta)^3$ lifts to an element in $\mr{TMF}_{\ast } \mr{M}({\infty}),$ where 
$ \mr{M}({\infty}) :=\underset{i \to \infty}{\mr{hocolim}} \ \mr{M}(i)$ (see \Cref{notn:M}), 
  whose image after inverting $c_4$ is 
   \[  \overline{v_1^{38} } \mathfrak{j}^{-3} \in v_1^{-1}\mr{TMF}_{\ast} \mr{M}({\infty}) \] 
as in \cite[$\mathsection$6]{BMQ23}. If $(\eta \Delta)^3$ is in the image of the Hurewicz map \eqref{K2HI}, then $\overline{v_1^{38} } \mathfrak{j}^{-3}$ must also be in the image of  $\ell \circ {\sf h}_{\mr{TMF}}$ in the diagram:
  \[ 
\begin{tikzcd}
& \mr{KO}_*^{\sma} \mr{M}({\infty}) \ar[dr, hook, "\mathfrak{j} =0"] \\
\pi_* \widehat{\mr{M}({\infty})} \rar["{\sf h}_{\mr{TMF}}"'] \ar[ru] &  \mr{TMF}_* \mr{M}({\infty})\rar["\ell"'] & v_1^{-1} \mr{TMF}_* \mr{M}({\infty})
\end{tikzcd}
\]
However, this contradicts the fact that the composite $\ell \circ {\sf h}_{\mr{TMF}}$ factors through $\mr{KO}_*^{\sma} \mr{M}(\infty)$, because $\overline{v_1^{38} } \mathfrak{j}^{-3}$ has a negative power of $\mathfrak{j}$, meaning it lies outside the submodule $\mr{KO}_*^{\sma} \mr{M}({\infty})$ of $v_1^{-1} \mr{TMF}_* \mr{M}({\infty})$. 
\end{proof}

 \bibliographystyle{plain}
\bibliography{InfiniteFamily}

\end{document}